\documentclass{elsarticle}

\usepackage{amsmath,amssymb,amsthm,mathrsfs}
\usepackage{tikz}
\usetikzlibrary{positioning}
\usepackage{color}

%\journal{Linear Algebra and its Applications}

%%%%%%%%%%%%%%%%%%%%%%%%%%%%%%%%%%%%%%%%%%%%%%%%

\pagestyle{plain}
\theoremstyle{plain}
\numberwithin{equation}{section}
%%%%%%%%%%%%%%%%%%%%%%%%%%%%%%%%%%%%%%%%%%%%%%%%%
% THEOREM . ---------------------------------------------------
\theoremstyle{plain}
\newtheorem{theorem}{Theorem}[section]
\newtheorem{corollary}[theorem]{Corollary}
\newtheorem{lemma}[theorem]{Lemma}
\newtheorem{proposition}[theorem]{Proposition}
\theoremstyle{definition}
\newtheorem{definition}[theorem]{Definition}

\newtheorem{remark}[theorem]{Remark}

%%\newenvironment{proof}{\textbf{Proof}}{\hfill$\Box$}
%%%%%%%%%%%%%%%%%%%%%%%%%%%%%%%%%%%%%%%%%%%%%%%%
\newcommand{\ands}{\quad\mbox{and}\quad}
\newcommand{\kernel}{\operatorname{Ker}}

\newcommand{\Dom}{\operatorname{Dom}}
\newcommand{\codim}{\operatorname{codim}}
\newcommand{\Index}{\operatorname{Index}}
\newcommand{\Rat}{{\mathrm{Rat}}}
\newcommand{\Ran}{\operatorname{Ran}}
\newcommand{\diag}{\operatorname{Diag}}
%\newcommand{\det}{{\mathrm{det}\, }}

%%%%%%%%%%%%%%%%%%%%%%%%%%%%%%%%%%%%%%%%%%%%%%%%Sanne's commands
\newcommand{\cP}{{\mathcal P}}

\newcommand{\BC}{{\mathbb C}}
\newcommand{\BT}{{\mathbb T}}

\newcommand{\BD}{{\mathbb D}}

\newcommand{\BP}{{\mathbb P}}
\newcommand{\BZ}{{\mathbb Z}}
\newcommand{\wtil}[1]{{\widetilde{#1}}}

\newcommand{\de}{\delta}

\newcommand{\om}{\omega}\newcommand{\Om}{\Omega}

\newcommand{\ov}[1]{{\overline{#1}}}

\newcommand{\pmat}[1]{\ensuremath{\begin{pmatrix} #1 \end{pmatrix}}}

\newcommand{\sbpm}[1]{\left(\begin{smallmatrix} #1\end{smallmatrix}\right)}

\begin{document}

\begin{frontmatter}

\title{A Toeplitz-like operator with rational matrix symbol having poles on the unit circle:\ Fredholm characteristics}

\author[GJG]{G.J. Groenewald}\ead{Gilbert.Groenewald@nwu.ac.za}
\author[StH]{S. ter Horst}\ead{Sanne.TerHorst@nwu.ac.za}
\author[JJJ]{J.J. Jaftha}\ead{Jacob.Jaftha@uct.ac.za}
\author[AR]{A.C.M. Ran}\ead{a.c.m.ran@vu.nl}

\address[GJG]{School of Mathematical and Statistical Sciences,
North-West University,
Research Focus: Pure and Applied Analytics,
Private~Bag X6001,
Potchefstroom 2520,
South Africa.}

\address[StH]{School of Mathematical and Statistical Sciences,
North-West University,
Research Focus: Pure and Applied Analytics,
Private~Bag X6001,
Potchefstroom 2520,
South Africa
and
DSI-NRF Centre of Excellence in Mathematical and Statistical Sciences (CoE-MaSS),
Johannesburg,
South Africa}

\address[JJJ]{Numeracy Center, University of Cape Town, Rondebosch 7701; Cape Town; South Africa}

\address[AR]{Department of Mathematics, Faculty of Science, VU Amsterdam, De Boelelaan 1111, 1081 HV Amsterdam, The Netherlands and Research Focus: Pure and Applied Analytics, North-West~University, Potchefstroom, South Africa}

%\thanks{This work is based on the research supported in part by the National Research Foundation of South Africa (Grant Numbers 118513 and 127364 and 145688).}

\begin{abstract}
In a recent paper (Groenewald et al.\ {\em Complex Anal.\ Oper.\ Theory} \textbf{15:1} (2021)) we considered an unbounded Toeplitz-like operator $T_\Om$ generated by a rational matrix function $\Om$ that has poles on the unit circle $\BT$ of the complex plane. A Wiener-Hopf type factorization was proved and this factorization was used to determine some Fredholm properties of the operator $T_\Om$, including the Fredholm index. Due to the lower triangular structure (rather than diagonal) of the middle term in the Wiener-Hopf type factorization and the lack of uniqueness, it is not straightforward to determine the dimension of the kernel of $T_\Om$ from this factorization, and hence of the co-kernel, even when $T_\Om$ is Fredholm. In the current paper we provide a formula for the dimension of the kernel of $T_\Om$ under an additional assumption on the Wiener-Hopf type factorization. In the case that $\Om$ is a $2 \times 2$ matrix function, a characterization of the kernel of the middle factor of the Wiener-Hopf type factorization is given and in many cases a formula for the dimension of the kernel is obtained. The characterization of the kernel of the middle factor for the $2 \times 2$ case is partially extended to the case of matrix functions of arbitrary size.
\end{abstract}

\begin{keyword}
Toeplitz operators, unbounded operators, Fredholm characteristics, Toeplitz kernels, rational matrix functions

\emph{AMS subject classifications:} {Primary 47B35, 47A53; Secondary 47A68}

\end{keyword}

\end{frontmatter}

{\date{}}

\begin{center}
\emph{Dedicated to Albrecht B\"{o}ttcher on the occasion of his seventieth  birthday.}
\end{center}

%\maketitle

%%%%%%%%%%%%%%%%%%%%%%%%%%%%%%%%%%%%%%%%%%%%%%%%%%%%%%%%%%%%%%%%%%%%%%%%%%%%%%%%
\section{Introduction}

The first work on unbounded Toeplitz operators goes back to the 1950s and 1960s \cite{HW50,H63,R69}. The topic gained more momentum through a paper of Sarason \cite{S08}, who connected it to truncated Toeplitz operators \cite{S07}. The recent paper \cite{BGS17} of B\"{o}ttcher, Garoni, and Serra-Capizzano gives insights in possible applications of Toeplitz operators with unbounded symbols. The current paper is a continuation of our earlier work on unbounded Toeplitz-like operators with rational symbols that are allowed to have poles on the unit circle. While the case of scalar rational symbols with poles on the unit circle has a fairly satisfactory theory by now (see \cite{GtHJR1, GtHJR2,GtHJR3}), the case of matrix valued rational symbols is only partly understood. In \cite{GtHJR21} we investigated the question when the unbounded Toeplitz-like operator is Fredholm. It transpired that the answer is, as one would expected, that Fredholmness corresponds to the rational matrix symbol having no zeroes on the unit circle. This is the same condition as in the scalar case, and the same condition as in the bounded case (i.e., when there are no poles of the rational matrix symbol on the unit circle). In addition, a formula for the Fredholm index was given in \cite{GtHJR21}, which was based on a Wiener-Hopf type factorization (see Theorem \ref{T:fact} below). For a more complete picture, one would also want to have formulas or the dimension of the kernel and codimension of the range of the Toeplitz-like operator in case the operator is Fredholm. However, it seems that these are not easily determined from the Wiener-Hopf type factorization.

The goal of the current paper is to remedy that situation, at least partly, in the following manner. We shall identify a condition on the factors in the Wiener-Hopf type factorization that allows us to provide formulas for the dimension of the kernel and the codimension of the range in case the Toeplitz-like operator is Fredholm. In addition, we analyse in detail the case of $2 \times 2$ matrix functions, in which case we characterize the kernel of the Toeplitz-like operator corresponding to the `middle term' in the Wiener-Hopf type factorization, leading to a formula of the dimension of the kernel of the original Toeplitz-like operator in many cases. Finally, this characterization is partially extended to the general case.

To define the Toeplitz-like operators and state some results, we introduce some notation. For positive integers $m$ and $n$, let $\Rat^{m\times n}$ denote the space of $m \times n$ rational matrix functions, abbreviated to $\Rat^{m}$ if $n=1$. We write $\Rat^{m\times  n}(\BT)$ for the functions in $\Rat^{m\times n}$ with poles only on $\BT$ and with $\Rat_0^{m\times  n}(\BT)$ we indicate the functions in $\Rat^{m\times n}(\BT)$ that are strictly proper, that is, whose limit at $\infty$ exists and is equal to the zero-matrix. Also here, if $n=1$ we write $\Rat^{m}(\BT)$ and $\Rat_0^{m}(\BT)$ instead of $\Rat^{m\times 1}(\BT)$ and $\Rat_0^{m\times 1}(\BT)$, respectively. In the scalar case, i.e., $m=n=1$, we simply write $\Rat$, $\Rat(\BT)$ and $\Rat_0(\BT)$, as was done in \cite{GtHJR1,GtHJR2,GtHJR3}.

A rational matrix function has a pole at $z_0$ if any of its entries has a pole at $z_0$. A zero of a square rational matrix function $\Omega(z)$ is a pole of its inverse $\Om^{-1}(z)=\Om(z)^{-1}$, see for example \cite{DS74}.
In the case of a square matrix polynomial $P(z)$, the zeroes coincide with the zeroes of the polynomial $\det P(z)$, but for square rational matrix functions, zeroes can also occur at points where the determinant is nonzero. In particular, zeroes may occur at the same points as poles.

The space of $m\times n$ matrix polynomials will be denoted by $\cP^{m\times n}$, abbreviated to $\cP^m$ if $n=1$ and to $\cP\/$ if $m=n=1$. For all positive integers $k$ we write $\cP^{m\times n}_k$, $\cP^{m}_k$ and $\cP_k$ for the polynomials in $\cP^{m\times n}$, $\cP^{m}$ and $\cP$ of degree at most $k$, respectively. For notational convenience, we set $\cP_k=\{0\}$ for $k<0$ and extend this notation to $\cP^{m\times n}$ and $\cP^{m}$ in the same way as above. Hence, we interpret the zero-polynomial as having degree $-\infty$. By $L_m^p$ and $H_m^p$ we shall mean Banach spaces consisting of vector-functions with $m$-components who are all in $L^p$ and $H^p$, respectively.

We can now define our Toeplitz-like operators. Let $\Om \in\Rat^{m\times  m}$ with possibly poles on $\BT$ and $\det\Om(z)\not \equiv 0$. Define
$T_\Om \left (H^p_m \rightarrow H^p_m \right ), 1 < p < \infty,$ by
\begin{equation}\label{TOm}
\begin{aligned}
\Dom(T_\Om)=\left\{\begin{array}{ll}
f\in H^p_m :&
\begin{array}{l}
\Om f = h + \eta \textrm{ where } h\in L^p_m(\BT),\\
\ands \eta \in\Rat_0^{m}(\BT) \\
\end{array}\\
\end{array}
\right \}, \\
T_\Om f = \BP h \textrm{ with } \BP \textrm{ the Riesz projection of } L^p_m(\BT) \textrm{ onto }H^p_m.
\end{aligned}
\end{equation}
By the Riesz projection, $\BP$, we mean the projection of $L^p_m$ onto $H^p_m$ due to M. Riesz, see for example pages 149--153 in \cite{H62}, in contrast to the Riesz projection in spectral operator theory, due to F. Riesz, see for example pages 9 - 13 in \cite{GGK90b}. For simplicity we will consider the square case only, but many of the results in this paper extend to the non-square case.

Recall that an $m\times m$ rational matrix function is called a \emph{plus function\/} if it has no poles in the closed unit disk $\ov{\BD}$ and it is said to be a \emph{minus function\/} if it has no poles outside the open unit disk $\BD$, with the point at infinity included.
In the case where $\Omega(z)$ has no poles on the unit circle, it is known that $T_\Om$ is Fredholm if and only if $\Omega(z)$ has no zeroes on the unit circle. In that case
the function $\Omega(z)$ can be factorized as
\[
\Omega(z)=\Omega_-(z)D(z)\Omega_+(z),
\]
where $\Omega_-$ and $\Omega_-^{-1}$ are minus functions, $\Omega_+$ and $\Omega_+^{-1}$ are plus functions and $D$ is a diagonal matrix with monomial entries on the diagonal:\ $D(z)={\diag\, }(z^{\kappa_j})_{j=1}^n$, where we may arrange the integers $\kappa_j$ such that they are in decreasing order: $\kappa_1\geq \kappa_2 \geq \cdots \geq \kappa_n$, see, e.g., \cite{CG}. This Wiener-Hopf factorization plays an important role in the study of the Toeplitz operator $T_\Om$ because of the following relation between the factorization of the rational matrix function and a factorization of the operator: $T_\Om=T_{\Om_-}T_D T_{\Om_+}$, and moreover, $T_{\Om_-}$ and $T_{\Om_+}$ are invertible, with inverses given by, respectively, $T^{-1}_{\Om_-}=T_{\Om_-^{-1}}$ and $T^{-1}_{\Om_+}=T_{\Om_+^{-1}}$. As a consequence, the Fredholm properties of $T_\Om$ and $T_D$ coincide. To be precise, the dimension of the kernel and the codimension of the range of $T_\Om$ can be read off from the
Wiener-Hopf indices $\kappa_1, \ldots , \kappa_n$ as follows:
\[
{\rm dim\ Ker\,}T_\Om=\sum_{\{j\mid \kappa_j <0\}}\kappa_j, \qquad
{\rm codim\ Ran\,}T_\Om=\sum_{\{j\mid \kappa_j >0\}}\kappa_j .
\]
Although the factors $\Om_+$ and $\Om_-$ in the Wiener-Hopf factorization are not uniquely determined by $\Om$, the middle factor $D$ is, at least in case the diagonal entries are ordered decreasingly, and hence so are the Wiener-Hopf indices.

In Theorems 1.1 and 1.2 of \cite{GtHJR21} we considered the case where $\Omega$ has poles on the unit circle. The following weaker version of the Wiener-Hopf factorization was proved.

%ADD PARAGRAPH INTRODUCING THE WIENER-HOPF TYPE FACTORIZATION OF \cite{GtHJR21}.

\begin{theorem}\label{T:fact}
Let $\Om\in\Rat^{m\times m}$ with $\det \Om \not\equiv 0$. Then
\begin{equation}\label{WHfact1}
\Om(z) = z^{-k}\Om_-(z) \Om_0(z) P_0(z) \Om_+(z),
\end{equation}
for some $k\geq0$, $\Om_+,\Om_0,\Om_-\in\Rat^{m\times m}$ and $P_0\in\cP^{m\times m}$ such that
\begin{itemize}
  \item[(i)] $\Om_-$ and $\Om_-^{-1}$ are minus functions;
  \item[(ii)] $\Om_+$  and $\Om_+^{-1}$ are plus functions;
  \item[(iii)] $\Om_0= \diag_{j=1}^m (\phi_j)$ with each $\phi_j\in\Rat(\BT)$ having zeroes and poles only on $\BT$;
  \item[(iv)] $P_0$ is lower triangular with $\det (P_0(z)) = z^N$ for some integer $N \geq 0$.
\end{itemize}
\end{theorem}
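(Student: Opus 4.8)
The plan is to reduce the general case to the scalar results of \cite{GtHJR1,GtHJR2,GtHJR3} applied entry-by-entry, combined with the classical Wiener--Hopf factorization for rational matrix functions without poles on $\BT$. First I would separate the poles of $\Om$ on $\BT$ from the rest. Writing $\Om = \Om_{\mathrm{bad}}\cdot\Om_{\mathrm{good}}$ is too naive because the pole structure does not factor multiplicatively in a diagonalizable way; instead I would use a Smith--McMillan type argument. Over the ring of rational functions with no poles on $\BT$ (a principal ideal domain after localizing, or at least one can work with the local rings at each point of $\BT$), one can bring $\Om$ to a diagonal form $\Om = E_-\, \Om_0\, E_+$ where $\Om_0 = \diag(\phi_j)$ collects exactly the elementary divisors supported on $\BT$ — i.e. each $\phi_j\in\Rat(\BT)$ has zeros and poles only on $\BT$ — and $E_\pm$ are rational matrix functions that are invertible over that ring, hence have neither poles nor zeros on $\BT$. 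This gives (iii).

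Next I would treat $E_-$ and $E_+$, which are now rational matrix functions with $\det$ not identically zero and no poles or zeros on $\BT$. Their product $E_+ E_-$ (or an appropriate combination) is again such a function, and I can apply the classical Wiener--Hopf factorization recalled in the Introduction to write it as $\Psi_- D \Psi_+$ with $\Psi_\pm^{\pm1}$ plus/minus functions and $D = \diag(z^{\kappa_j})$. Absorbing $\Psi_-$ on the left and $\Psi_+$ on the right, the remaining obstruction to the full factorization \eqref{WHfact1} is the diagonal monomial matrix $D$ together with the fact that $\Om_0$ and $D$ need to be merged: $D$ has both positive and negative powers on the diagonal, and I must show the negative powers can be pulled out as the scalar $z^{-k}$ and the positive powers absorbed into a \emph{lower triangular} polynomial matrix $P_0$ with $\det P_0(z) = z^N$, $N\ge 0$. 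Pulling out $z^{-k}$ with $k = \max(0, -\min_j\kappa_j)$ multiplied diagonally is immediate; what remains on the diagonal is $\diag(z^{\kappa_j + k})$ with nonnegative exponents, which is already lower triangular with determinant $z^{\sum(\kappa_j+k)}$, so one may take this (after reordering rows, which is a permutation hence absorbable into the plus/minus factors, or kept as part of $P_0$) as $P_0$, giving (iv). The plus/minus properties (i) and (ii) are preserved under multiplication of plus (resp. minus) functions and under absorbing the permutation, since permutation matrices are both plus and minus functions with polynomial inverse.

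The main obstacle I anticipate is the first step: justifying the diagonalization $\Om = E_-\Om_0 E_+$ with $E_\pm$ having \emph{no poles and no zeros on $\BT$} while $\Om_0$ carries exactly the $\BT$-supported elementary divisors. The subtlety is that the relevant ring — rational functions regular on $\BT$ — is not a PID, so one cannot invoke Smith normal form directly; one must localize at each point of $\BT$ (where the local ring \emph{is} a discrete valuation ring or a product of such) and patch, or alternatively argue via a sequence of elementary row/column operations that clear denominators off $\BT$ one pole at a time, as was presumably done in \cite{GtHJR21}. A related point needing care is that the operations used to diagonalize may introduce extra poles or zeros off $\BT$ in $E_\pm$ — this is harmless for properties (i)--(iv) but means $E_\pm$ themselves are not yet plus/minus, which is exactly why the second Wiener--Hopf step is needed. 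I would also double-check that the $z^{-k}$ scalar prefactor genuinely suffices, i.e. that one never needs different negative powers on different diagonal entries that cannot be balanced by the lower-triangular $P_0$; since $P_0$ must be a genuine polynomial matrix, all negative powers must come from the single scalar $z^{-k}$, and verifying the exponent bookkeeping closes this gap.
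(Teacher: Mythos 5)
There is a genuine gap, and it sits exactly where the theorem is hard. Your first step is fine --- in fact easier than you fear: the ring of rational functions with no poles on $\BT$ is the localization of $\BC[z]$ at the multiplicative set of polynomials without zeros on $\BT$, hence \emph{is} a PID, so the Smith--McMillan form gives $\Om=E\,\Om_0\,F$ with $\Om_0=\diag(\phi_j)$ carrying exactly the $\BT$-supported zeros and poles and $E,F$ having neither poles nor zeros on $\BT$. The problem is the second step. The two outer factors sit on \emph{opposite sides} of $\Om_0$, so the product ``$E_+E_-$'' you propose to factorize never occurs in the identity $\Om=E\,\Om_0\,F$; you must apply the classical Wiener--Hopf factorization to $E$ and $F$ separately, say $E=E_-D_E E_+$ and $F=F_-D_F F_+$. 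This leaves $\Om=E_-\bigl(D_EE_+\,\Om_0\,F_-D_F\bigr)F_+$, and the plus function $E_+$ and the minus function $F_-$ are now trapped against $\Om_0$. They do not commute with $\Om_0$ (whose diagonal entries have poles and zeros on $\BT$), and there is no general way to push them to the outside; ``absorbing $\Psi_-$ on the left and $\Psi_+$ on the right'' is precisely the step that fails. All the real work of the theorem is in taming this middle block, and that is why the conclusion only delivers a \emph{lower triangular} polynomial $P_0$ with $\det P_0(z)=z^N$ rather than a diagonal one.

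A structural sanity check confirms the gap: your argument, if it went through, would always produce a diagonal $P_0$, i.e.\ every $\Om$ would admit a factorization in special form in the sense of Definition~\ref{D:SpecialForm}. But the paper explicitly states that it is unknown whether this is always possible, and Sections~\ref{S:2x2}--\ref{S:KernChar} are devoted to the complications caused by a genuinely non-diagonal $P_0$. So your route proves too much. Note also that the present paper does not reprove Theorem~\ref{T:fact}; it is quoted from Theorems~1.1 and~1.2 of \cite{GtHJR21}, where the lower triangular $P_0$ is produced by a careful (Hermite-type) reduction of the polynomial part that remains after extracting the $\BT$-singularities --- the step your sketch replaces with an unjustified absorption. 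The remaining bookkeeping in your proposal (pulling out $z^{-k}$ with $k=\max(0,-\min_j\kappa_j)$, permutations being simultaneously plus and minus functions) is correct but does not touch the real difficulty.
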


In fact, Theorem 1.2 in \cite{GtHJR21}, gives more restrictions on the middle factor

\begin{equation}\label{MiddleFact}
\Xi(z):=z^{-k}\Om_0 (z)P_0(z),
\end{equation}
making it possible to have $\Omega_0$ uniquely determined. Note that the diagonal entries of $P_0$ have to be of the form $z^{k_j}$, $j=1, \ldots , n$ for some integers $k_j$.
It was also shown in examples in \cite{GtHJR21}, Section 7, that the middle factor is not unique, in particular, the $k_j$'s are not unique.

Despite this non-uniqueness, the Wiener-Hopf type factorization is useful in studying the Fredholm properties of the operator $T_\Om$. Indeed, we have that $T_\Om$ is Fredholm if and only if $T_\Xi$ is Fredholm, and their indices coincide, as well as the dimensions of the kernels and codimensions of the range. In the next theorem we summarize the results of Theorem 1.3 and 1.4 in \cite{GtHJR21}.

\begin{theorem}
Let $\Om\in\Rat^{m\times m}$ with $\det \Om \not\equiv 0$ and Wiener-Hopf type factorization \eqref{WHfact1} as in Theorem \ref{T:fact} with $\Om_0=\diag_{j=1}^m (s_j/q_j)$ with $s_j,q_j\in\cP$ co-prime with roots only on $\BT$ and $z^{k_j}$ the $j$-th diagonal entry of $P_0$, for $j=1,\ldots, m$.
Then the following hold
\begin{itemize}
\item[{(i)}]
$T_\Om = T_{\Om_-} T_{z^{-k}}T_{\Om_0}T_{P_0} T_{\Om_+}$,
\item[{(ii)}]
$T_\Om$ is Fredholm if and only if $T_{\Om_0}$ is Fredholm, which happens exactly when  $s_1,\ldots,s_m$ are all constant,
\item[{(iii)}]
in case $T_\Om$ is Fredholm we have
\begin{align*}
\Index (T_\Om) & = mk + \Index(T_{\Om_0}) + \Index(T_{P_0})\\
& = mk + \sum_{j=1}^m \deg(q_j)  - \sum_{j=1}^m k_j.
\end{align*}
\end{itemize}
\end{theorem}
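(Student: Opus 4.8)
The plan is to turn the factorization \eqref{WHfact1} of $\Om$ into a factorization of the operator $T_\Om$ and then read off the Fredholm data from the individual factors, each of which is either bounded and invertible, or a bounded classical Toeplitz operator, or (for $\Om_0$) a direct sum of scalar Toeplitz-like operators covered by \cite{GtHJR1,GtHJR2,GtHJR3}. To obtain (i) I would first establish two multiplication rules for Toeplitz-like operators. Rule A: if $M\in\Rat^{m\times m}$ is a minus function, then $T_{MG}=T_MT_G$ with $\Dom(T_{MG})=\Dom(T_G)$, for every $G\in\Rat^{m\times m}$ with $\det G\not\equiv 0$. Rule B: if $P\in\Rat^{m\times m}$ is a plus function, then $T_{GP}=T_GT_P$ with $\Dom(T_{GP})=\{f\in H^p_m:Pf\in\Dom(T_G)\}$. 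Both are proved by direct computation with $\BP$: a minus function has, on $\BT$, an expansion in nonpositive powers of $z$, so left multiplication by $M$ sends an element of $\Rat_0^m(\BT)$ to the sum of an element of $\Rat_0^m(\BT)$ and a strictly proper rational function with poles only in $\BD$, the latter lying in $L^p_m(\BT)$ and killed by $\BP$; a plus function maps $H^p_m$ into itself, so $T_Pf=Pf$ and the defining condition for $T_{GP}$ at $f$ is literally the one for $T_G$ at $Pf$. (For $M=z^{-k}$, a scalar minus function, the domain equality is checked separately, by multiplying through by $z^k$ and splitting off the polynomial part.) Writing $\Om=\Om_-\,(z^{-k}\Om_0P_0)\,\Om_+$ and applying Rule A with $M=\Om_-$ and Rule B with $P=\Om_+$ gives $T_\Om=T_{\Om_-}T_{z^{-k}\Om_0P_0}T_{\Om_+}$; then Rule A with $M=z^{-k}$ and Rule B with $P=P_0$ (a matrix polynomial, hence a plus function) give $T_{z^{-k}\Om_0P_0}=T_{z^{-k}}T_{\Om_0}T_{P_0}$, which is (i).

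Next I would analyse the factors. Applying Rule A to $\Om_-\Om_-^{-1}=I=\Om_-^{-1}\Om_-$ and Rule B to $\Om_+\Om_+^{-1}=I=\Om_+^{-1}\Om_+$ shows that $T_{\Om_-}$ and $T_{\Om_+}$ are bounded and invertible with inverses $T_{\Om_-^{-1}}$ and $T_{\Om_+^{-1}}$. The operators $T_{z^{-k}}$ and $T_{P_0}$ are bounded classical Toeplitz operators, their symbols having no poles on $\BT$: $T_{z^{-k}}$ has kernel $\cP^m_{k-1}$ and is onto, since $T_{z^{-k}}T_{z^k}=T_{z^{-k}z^k}=I$ by Rule A, hence is Fredholm with $\Index T_{z^{-k}}=mk$; and $T_{P_0}$ is injective (it is multiplication by $P_0$ on $H^p_m$ and $\det P_0\not\equiv 0$) and Fredholm with index equal to minus the winding number of $\det P_0(z)=z^N$ along $\BT$, that is $\Index T_{P_0}=-N=-\sum_{j=1}^m k_j$ and $\codim\Ran T_{P_0}=\sum_{j=1}^m k_j$. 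Finally, since $\Om_0$ is diagonal the defining condition in \eqref{TOm} decouples coordinatewise (the splitting $h+\eta$ being unique), so $T_{\Om_0}=\bigoplus_{j=1}^m T_{s_j/q_j}$; by \cite{GtHJR1,GtHJR2,GtHJR3}, each $T_{s_j/q_j}$ is Fredholm if and only if $s_j$ has no zero on $\BT$, equivalently (all zeros of $s_j$ lying on $\BT$) if and only if $s_j$ is a nonzero constant, in which case $\Index T_{s_j/q_j}=\degr q_j$. Hence $T_{\Om_0}$ is Fredholm exactly when $s_1,\dots,s_m$ are all constant, and then $\Index T_{\Om_0}=\sum_{j=1}^m\degr q_j$.

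To finish, note that by (i) we have $T_\Om=T_{\Om_-}\,C\,T_{\Om_+}$ with $C:=T_{z^{-k}}T_{\Om_0}T_{P_0}$, which is closed and densely defined (its domain contains all polynomial vectors). As $T_{\Om_\pm}$ are bounded and invertible, $T_\Om$ is Fredholm if and only if $C$ is, with the same kernel dimension, range codimension and index. It then remains to peel off the bounded Fredholm factors $T_{z^{-k}}$ and $T_{P_0}$ from $C$ using the following principle: if $A$ is bounded and Fredholm and $B$ is closed and densely defined, then $AB$ (respectively $BA$) is Fredholm if and only if $B$ is, and in that case the index is additive. This is obtained by regarding $B$ as a bounded Fredholm operator from its graph-normed domain into $H^p_m$ and using the stability of the Fredholm property and additivity of the index under composition of Fredholm operators between Banach spaces. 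Applying it with $A=T_{z^{-k}}$ on the left and then $A=T_{P_0}$ on the right, $C$ is Fredholm if and only if $T_{\Om_0}$ is, i.e.\ if and only if $s_1,\dots,s_m$ are all constant, which together with the previous paragraph gives (ii); moreover $\Index T_\Om=\Index C=\Index T_{z^{-k}}+\Index T_{\Om_0}+\Index T_{P_0}=mk+\Index T_{\Om_0}+\Index T_{P_0}$, and inserting $\Index T_{P_0}=-\sum_{j=1}^m k_j$ and $\Index T_{\Om_0}=\sum_{j=1}^m\degr q_j$ yields the two displayed formulas in (iii).

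The main obstacle is not any single computation but the careful handling of the unbounded operator $T_{\Om_0}$ (unbounded as soon as some $q_j$ is nonconstant): one must check that Rules A and B hold at the level of domains rather than only on a dense subspace, that the compositions occurring in (i) are closed with the stated domains, and that Fredholm stability and index additivity survive when one of the composed operators is unbounded and closed. Granting these, the rest reduces to the scalar results of \cite{GtHJR1,GtHJR2,GtHJR3} and the elementary structure of $T_{z^{-k}}$ and $T_{P_0}$.
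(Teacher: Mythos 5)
The paper itself does not prove this theorem: it is quoted verbatim as a summary of Theorems 1.3 and 1.4 of \cite{GtHJR21}, and your reconstruction follows essentially the same route as that source (operator factorization along \eqref{WHfact1}, invertibility of $T_{\Om_-}$ and $T_{\Om_+}$, reduction of the middle part to the classical factors $T_{z^{-k}}$, $T_{P_0}$ and the diagonal unbounded factor $T_{\Om_0}=\oplus_j T_{s_j/q_j}$, then Fredholm stability and index additivity). The Fredholm bookkeeping, the scalar input from \cite{GtHJR1,GtHJR2,GtHJR3}, and the index values $mk$, $-\sum_j k_j$, $\sum_j \deg(q_j)$ are all correct.

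One point needs repair, though it does not affect the conclusion: Rule A as you state it is false in that generality. If the minus function $M$ has zeroes on $\BT$ (so that $M^{-1}$ has poles there), the domain equality $\Dom(T_{MG})=\Dom(T_G)$ can fail --- e.g.\ scalar $M(z)=(z-1)/z$, $G(z)=1/(z-1)$ gives $MG=z^{-1}$ with $\Dom(T_{MG})=H^p\neq\Dom(T_G)$ --- and your sketch (pushing $M\cdot\Rat_0^m(\BT)$ into $L^p_m+\Rat_0^m(\BT)$ and using that the poles-in-$\BD$ part is annihilated by $\BP$) only yields the inclusion $\Dom(T_G)\subseteq\Dom(T_{MG})$ together with $T_{MG}f=T_MT_Gf$ there. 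For the operator identity in (i), including domains, you also need the reverse inclusion, which requires $M^{-1}$ to have no poles on $\BT$. In the two instances you actually use, this is available: $\Om_-^{-1}$ is itself a minus function and $z^k$ is a polynomial, so running the same computation with $M^{-1}$ in place of $M$ gives the missing inclusion. With Rule A restated for minus functions whose inverse is again pole-free on $\BT$ (or simply verified for $\Om_-$ and $z^{-k}$), the rest of your argument --- including the graph-norm reduction used to peel off the bounded Fredholm factors from the closed operator $T_{\Om_0}$ --- goes through as intended.
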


Unfortunately, due to the non-uniqueness and lower triangular form, rather than diagonal, of the middle term $\Xi$, it is not straightforward how the dimension of the kernel and the codimension of the range of $T_\Om$ can be determined from the Wiener-Hopf type factorization, which is possible in the case when there are no poles on the unit circle, due to the simpler nature of the Wiener-Hopf factorization in that case.

In this paper we shall determine the dimension of the kernel of $T_\Om$ in certain special cases where the kernel of $T_\Xi$ can be described and its dimension can be computed. Note that in the Wiener-Hopf type factorization, the middle factor $\Xi(z):=z^{-k} \Om_0(z) P_0(z)$ has the form
\begin{equation}\label{Xi}
\begin{aligned}
&\Xi(z)= z^{-k}\begin{pmatrix}
\frac{p_{11}(z)}{q_1(z)} & 0 & 0 & \cdots & 0\\
\frac{p_{21}(z)}{q_2(z)} & \frac{p_{22}(z)}{q_2(z)} & 0 & \cdots & 0 \\
\vdots & \vdots & \ddots & \ddots & \vdots \\
\vdots & \vdots & \ddots & \ddots & 0\\
\frac{p_{m1}(z)}{q_m(z)} & \frac{p_{m2}(z)}{q_m(z)}& \cdots & \frac{p_{m(m-1)}(z)}{q_m(z)}  & \frac{p_{mm}(z)}{q_m(z)}
\end{pmatrix}\\
&\mbox{$p_{ij},q_j\in\cP$, $q_i$ has zeroes only on $\BT$, $p_{ii}(z)=z^{k_i}$ for $1\leq j\leq i\leq m$}.
\end{aligned}
\end{equation}
The construction in \cite{GtHJR21} gives that, in addition, it is possible to arrange
\begin{equation}\label{XiExtra}
\deg(p_{ij})<k_i,\ 1\leq j\leq i\leq m
\quad\mbox{and}\quad q_i \mbox{ divides } q_{i+1},\ 1\leq i<m,
\end{equation}
which we will do in the remainder of the paper. However, even with these extra conditions, the middle factor is not unique.

\begin{definition}\label{D:SpecialForm}
A Wiener-Hopf type factorization \eqref{WHfact1} of $\Omega\in\Rat^{m \times m}$ is said to be in \emph{special form} if in $\Xi$ as in \eqref{Xi}, the largest difference between the degree of the numerator and the degree of the denominator in each column occurs on the diagonal of $\Xi(z)$, i.e., when
\begin{equation}\label{SpecialFormId}
k_n - \deg(q_n) \geq \deg(p_{jn}) - \deg(q_j), \qquad j = n+1,\ldots,m.
\end{equation}
\end{definition}

Note that if $\Om$ admits a Wiener-Hopf type factorization with middle term $\Xi$ as in \eqref{Xi} diagonal, with \eqref{XiExtra} in place, then this Wiener-Hopf type factorization is in special form.  It is unknown to us whether given any $\Om\in\Rat^{m \times m}$ with $\det \Om\not\equiv 0$ admits a Wiener-Hopf type factorization as in Theorem \ref{T:fact} with middle factor $\Xi$ in special form. On the other hand, we also do not have a counter-example to this. There are certain cases though for which  it is possible to obtain a Wiener-Hopf type factorization in special form from a Wiener-Hopf type factorization which is not; see Example 7.3 in \cite{GtHJR21} and Lemma \ref{L:2x2mono} below.

In the next theorem we state the main result of the paper.

\begin{theorem}\label{T:Main}
Let $\Om(z)\in\Rat^{m \times m}$ be a rational matrix function with poles on the unit circle $\BT$ given by a Wiener-Hopf type factorization \eqref{WHfact1} that is in special form, that is, $\Xi(z)=z^{-k} \Om_0(z) P_0(z)$ as in \eqref{Xi} satisfies \eqref{SpecialFormId}. Then
\[\dim \kernel T_{\Om} = \sum_{j=1}^m \max(k + \deg(q_j) - k_j, 0).\]
Furthermore, if in addition $\Omega(z)$ has no zeroes on the unit circle, then $T_\Om$ is Fredholm, and
\[\codim \Ran T_{\Om} = \sum_{j=1}^m \max(k_j - \deg(q_j) - k, 0).\]
\end{theorem}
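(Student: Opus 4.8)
The plan is to transfer the computation to the middle factor $\Xi$, reduce it to an elementary degree count, and then read off the range statement from the index formula. For the reduction: since $T_{\Om_-}$ and $T_{\Om_+}$ are invertible and $T_\Om$, $T_\Xi$ have the same kernel dimension and the same range codimension (as recalled above, following \cite{GtHJR21}), it suffices to treat $T_\Xi$ with $\Xi$ as in \eqref{Xi} satisfying \eqref{SpecialFormId}. For $f=(f_1,\dots,f_m)\in H^p_m$ write $\psi_i:=\sum_{j=1}^i p_{ij}f_j\in H^p$, so that $(\Xi f)_i=z^{-k}\psi_i/q_i$; then directly from \eqref{TOm}, $f\in\kernel T_\Xi$ if and only if for each $i$ one can write $z^{-k}\psi_i/q_i=h_i+\eta_i$ with $h_i\in\Ran(I-\BP)$ and $\eta_i\in\Rat_0(\BT)$.

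Next I would show, by induction on $i$, that every $f\in\kernel T_\Xi$ has $f_1,\dots,f_i\in\cP$ with $\degr(\psi_i)<k+\degr(q_i)$. Assuming $f_1,\dots,f_{i-1}\in\cP$, so $\psi_i=z^{k_i}f_i+(\text{a polynomial})\in H^p$, the identity $z^{-k}\psi_i/q_i=h_i+\eta_i$ gives $\psi_i=z^kq_ih_i+z^kq_i\eta_i$. Since $z^kq_ih_i\in L^p(\BT)$ and $\psi_i\in H^p$ have no poles on $\BT$, neither does the rational function $z^kq_i\eta_i$; as the zeros of $q_i$ lie on $\BT$ while $z^k$ vanishes only at $0$, the denominator of $\eta_i$ must divide $q_i$, so $\rho_i:=z^kq_i\eta_i$ is a polynomial of degree $<k+\degr(q_i)$. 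Then $z^kh_i=(\psi_i-\rho_i)/q_i$ is analytic in $\BD$ with $L^p$ boundary values, hence lies in $H^p$, and $\BP h_i=0$ forces all its Fourier coefficients from index $k$ on to vanish, i.e. $(\psi_i-\rho_i)/q_i\in\cP_{k-1}$. Thus $\psi_i=\rho_i+q_i\cdot(\text{a polynomial of degree}<k)\in\cP$ with $\degr(\psi_i)<k+\degr(q_i)$, and since $z^{k_i}f_i=\psi_i-(\text{a polynomial})$ is then a polynomial, necessarily divisible by $z^{k_i}$ as $f_i$ is analytic at $0$, also $f_i\in\cP$. (For $m=1$ this is the scalar computation of \cite{GtHJR1,GtHJR2,GtHJR3}.) Conversely, if $f\in\cP^m$ with $\degr(\psi_i)<k+\degr(q_i)$ for all $i$, a partial fraction decomposition gives $z^{-k}\psi_i/q_i=A_i(z)/z^k+B_i(z)/q_i(z)$ with $\degr(A_i)<k$ and $\degr(B_i)<\degr(q_i)$, so $z^{-k}\psi_i/q_i\in\Ran(I-\BP)+\Rat_0(\BT)$ and $f\in\kernel T_\Xi$.

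The third step, where \eqref{SpecialFormId} enters, is the crux of the matter: for $j<i$ that condition reads $\degr(p_{ij})\le\degr(q_i)+k_j-\degr(q_j)$, so whenever $\degr(f_j)\le k+\degr(q_j)-k_j-1$ one has $\degr(p_{ij}f_j)\le k+\degr(q_i)-1$, matching exactly the bound that the diagonal term $z^{k_i}f_i$ contributes when $\degr(f_i)\le k+\degr(q_i)-k_i-1$. Feeding this into the induction above gives $\degr(f_i)\le k+\degr(q_i)-k_i-1$ for every kernel element, while conversely any $f\in\cP^m$ obeying all these bounds has $\degr(\psi_i)\le k+\degr(q_i)-1$ for all $i$ and hence lies in $\kernel T_\Xi$ by the criterion just proved. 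Therefore $\kernel T_\Xi=\prod_{j=1}^m\cP_{k+\degr(q_j)-k_j-1}$ (with $\cP_\ell:=\{0\}$ for $\ell<0$), so $\dim\kernel T_\Om=\dim\kernel T_\Xi=\sum_{j=1}^m\max(k+\degr(q_j)-k_j,0)$.

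Finally, if $\Om$ additionally has no zeroes on $\BT$, then $T_\Om$ is Fredholm and, by \cite{GtHJR21}, $\Index(T_\Om)=mk+\sum_{j}\degr(q_j)-\sum_{j}k_j=\sum_{j}\big(k+\degr(q_j)-k_j\big)$; subtracting this from the kernel dimension and using $\max(a,0)-a=\max(-a,0)$ gives $\codim\Ran T_\Om=\dim\kernel T_\Om-\Index(T_\Om)=\sum_{j=1}^m\max(k_j-\degr(q_j)-k,0)$. The points requiring real care are the treatment of the rational parts $\eta_i$ in the second step and the recognition that \eqref{SpecialFormId} is exactly the inequality that closes the degree induction in the third; everything else is routine bookkeeping.
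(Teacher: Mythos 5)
Your proof is correct and follows essentially the same route as the paper's: it reduces to the middle factor $\Xi$, characterizes $\kernel T_\Xi$ by clearing denominators row by row and counting degrees (your inline treatment of the rational parts $\eta_i$ reproduces the content of the paper's Lemmas \ref{L:MidtermDom} and \ref{L:polykern}), uses the special-form inequality to close the degree induction exactly as in Proposition \ref{P:kernel2}, and derives the range codimension from the index formula as in Proposition \ref{P:MainPt2}. No gaps.
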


In Theorem 1.4, we obtain the dimension of the kernel without assuming Fredholmness, but for the codimension of the range we assume Fredholmness, and then use the formula for the index. However, in the scalar case, when the operator is not Fredholm, we still get that the %orthogonal
complement of the closure of the range is finite dimensional. It is unknown to us whether the formula for the codimension of the range of $T_\Om$ can also be proved without assuming Fredholmness.

The numbers
\[
\kappa_j:=k_j-\deg(q_j)-k,\quad j=1,\ldots,m,
\]
that appear in the formulas for the dimensions of the kernel and cokernel of the Fredholm operator $T_\Om$ in Theorem \ref{T:Main} are called the factorization indices of $T_\Om$. Using arguments similar to the proof for the classical case, we show that factorization indices are uniquely determined by $T_\Om$, in Proposition \ref{P:indices} below.

To illustrate the complications that occur in determining the dimension of the kernel of $T_\Om$, we analyse the case of $2 \times 2$ matrix functions in detail. In that case, without further assumptions on the matrix function, from a Wiener-Hopf type factorization we give descriptions of the kernel of $T_\Xi$, where $\Xi$ is the middle factor of the Wiener-Hopf type factorization, and use this description to provide a formula for the dimension of the kernel of $T_\Om$ in many cases,  and give an upper bound in the one case where we do not obtain a formula for the dimension of $\kernel T_\Om$. It is also shown that most of these do not correspond to a Wiener-Hopf type factorization that is in special form. The description of the kernel of $T_\Xi$ can partially be extended to the general case. However, given the many cases that already appear in the case of $2 \times 2$ matrix functions, we did not attempt to provide an analogous result for the general case, but rather restricted to a recursive procedure to determine $\dim \kernel T_\Om$.

We conclude the introduction with a brief summary of the paper. The paper consist of five sections, including the present introduction. In Section \ref{S:MiddleFactor} we provide some analysis of the middle factor $\Xi$ of a Wiener-Hopf type factorization, which we will need in subsequent sections. Theorem \ref{T:Main} and the fact that the factorization indices are uniquely determined are proved in Section \ref{S:MainTheorem}. The case of $2 \times 2$ matrix functions is studied in Section \ref{S:2x2}. There we give a description of the kernel of the Toeplitz type operator of the middle factor $\Xi$ of a Wiener-Hopf type factorization and determine the dimension of the kernel in many special cases, and give an upper bound in the one case where we do not obtain a concrete formula for the dimension. Finally, in Section \ref{S:KernChar} the description of the kernel of $T_\Xi$ from the $2 \times 2$ case is partially extended to the general case and a recursive procedure is presented to determine the dimension of the kernel.

%%%%%%%%%%%%%%%%%%%%%%%%%%%%%%%%%%%%%%%%%%%%%%%%%%%%%%%%%%%%%%%%%%%%%%%%%%%%%%%%
\section{Some analysis of the middle factor $\Xi$}\label{S:MiddleFactor}

In this section we provide some analysis of the Toeplitz-like operator of the middle factor $\Xi$ in \eqref{MiddleFact} of the Wiener-Hopf type factorization \eqref{WHfact1}, specified further as in \eqref{Xi} and with the conditions \eqref{XiExtra} in place. We start with some general results, after which we prove Theorem \ref{T:Main}. Before turning to the kernel, we first provide a formula for the domain of $T_\Xi$, which can be viewed as the matrix analogue of the first part of Lemma 2.3 of \cite{GtHJR1}.

\begin{lemma}\label{L:MidtermDom}
Let $\Om\in\Rat^{m\times m}$ with poles on $\BT$ and Wiener-Hopf type factorization \eqref{WHfact1} with middle factor $\Xi$ in \eqref{MiddleFact} of the form \eqref{Xi} with conditions \eqref{XiExtra} in place. Then
\begin{equation}\label{DomXi}
\Dom(T_\Xi)=
\left\{f\in H^p_m \colon\!\!
\begin{array}{l}
\Xi f  =z^{-k} h + \eta \mbox{ with $h\in H^p_m,\, \eta\in\Rat_0^m(\BT)$}\!\!\!\! \\ \mbox{where $\eta_j=\frac{r_j}{q_j}$, $\deg(r_j)<\deg(q_j)$}
\end{array}
\right\}.
\end{equation}
\end{lemma}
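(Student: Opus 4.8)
The plan is to unravel the definition of $\Dom(T_\Om)$ through the Wiener-Hopf type factorization. Since $\Om = z^{-k}\Om_-\Om_0 P_0 \Om_+$ with $\Om_\pm^{\pm 1}$ plus/minus functions, and since $T_{\Om_-}, T_{\Om_+}$ are boundedly invertible on $H^p_m$ (with inverses $T_{\Om_-^{-1}}, T_{\Om_+^{-1}}$), one expects $\Dom(T_\Om) = \Om_+^{-1}\Dom(T_\Xi)$ where $\Xi = z^{-k}\Om_0 P_0$. So it suffices to characterize $\Dom(T_\Xi)$ directly. By definition $f\in\Dom(T_\Xi)$ precisely when $\Xi f = h_0 + \eta_0$ with $h_0\in L^p_m(\BT)$ and $\eta_0\in\Rat_0^m(\BT)$. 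The first step is to observe that, because the only denominators appearing in $\Xi$ are $q_1,\dots,q_m$ (with $q_i \mid q_{i+1}$), and because $z^{-k}$ only introduces a pole at $0$, which is off $\BT$, any strictly proper rational part of $\Xi f$ with poles on $\BT$ must have $j$-th component of the form $r_j/q_j$ with $\deg(r_j) < \deg(q_j)$; this is exactly the condition imposed on $\eta$ in \eqref{DomXi}, and it is forced, not an extra assumption. So I would first reduce to showing: $f\in H^p_m$ satisfies $\Xi f = h_0 + \eta_0$ with $h_0\in L^p_m$, $\eta_0 = (r_j/q_j)_j$, $\deg r_j < \deg q_j$ if and only if $\Xi f = z^{-k}h + \eta$ with $h\in H^p_m$, $\eta$ of the same form.

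The key algebraic manipulation is to clear the $z^{-k}$. Write $\Xi f = z^{-k}(\Om_0 P_0 f)$; since $\Om_0 P_0$ is a rational matrix function with poles only on $\BT$, the vector $g := \Om_0 P_0 f$ is (componentwise) a function in $L^p$ plus a strictly proper rational term with poles on $\BT$ — here I use that $f\in H^p_m$ and the entries of $\Om_0 P_0$ are rational with controlled pole structure (this is where one invokes, componentwise, the scalar theory, essentially Lemma 2.3 of \cite{GtHJR1}). Then $\Xi f = z^{-k}g$, and I need to compare the two possible ways of splitting $z^{-k}g$ into an $L^p$ (resp.\ $H^p$) part and a $\Rat_0(\BT)$ part. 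The point is that $z^{-k}$ times an $H^p$ function is an $L^p$ function whose negatively-indexed Fourier coefficients form a polynomial in $1/z$ of degree $< k$, i.e.\ it differs from an $H^p$ function by an element of $\Rat_0(\BT)$ — in fact by $\cP_{k-1}$ evaluated at $1/z$, which has its only pole at $0$, not on $\BT$. So adjusting between $h_0\in L^p_m$ and $z^{-k}h$ with $h\in H^p_m$ changes only a rational term with a pole at $0$, and this neither interferes with nor is absorbed by $\eta$ (whose poles are on $\BT$). Tracking this bookkeeping carefully gives the equivalence of the two descriptions.

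I expect the main obstacle to be the careful separation of the pole at $0$ (introduced by $z^{-k}$) from the poles on $\BT$ (carried by the $q_j$'s), together with verifying that the rational part $\eta_0$ one obtains from $\Xi f = h_0 + \eta_0$ automatically has the componentwise form $r_j/q_j$ with $\deg r_j < \deg q_j$. The latter requires knowing that in $g = \Om_0 P_0 f$ the denominator of the $j$-th component divides $q_j$ — which follows from the explicit form \eqref{Xi} of $\Xi$ and the divisibility $q_i \mid q_{i+1}$ in \eqref{XiExtra} — and that no cancellation with the $z^{-k}$ factor can create or destroy poles on $\BT$. Once the pole at $0$ is quarantined, the remaining argument is the same componentwise reasoning as in the scalar case, applied row by row using the lower-triangular structure of $P_0$.
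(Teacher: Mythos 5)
Your outline follows essentially the same route as the paper's proof: the inclusion of the right-hand side of \eqref{DomXi} into $\Dom(T_\Xi)$ is immediate, and for the reverse inclusion one works row by row, clears denominators, and runs the scalar argument of Lemma 2.3 of \cite{GtHJR1}. Two points, however, need repair before the plan becomes a proof. First, the claim that $g=\Om_0P_0f$ lies in $L^p_m+\Rat_0^m(\BT)$ ``because $f\in H^p_m$ and the entries of $\Om_0P_0$ are rational with controlled pole structure'' is false as stated: if it held for every $f\in H^p_m$, then $T_{\Om_0 P_0}$ (and hence $T_\Xi$) would be everywhere defined, whereas these domains are proper dense subspaces of $H^p_m$ (already $f/(z-1)\notin L^p+\Rat_0(\BT)$ for a general $f\in H^p$). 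The decomposition of $g$ is available only because you are assuming $f\in\Dom(T_\Xi)$, so that $\Xi f=g_0+\eta_0$ with $g_0\in L^p_m$ and $\eta_0\in\Rat_0^m(\BT)$ by hypothesis; it is that hypothesis, not the rational structure of the symbol, that must be quoted.

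Second, your discussion of the factor $z^{-k}$ only establishes the trivial containment $z^{-k}H^p\subseteq L^p$, i.e.\ the easy direction; it does not explain why, for $f\in\Dom(T_\Xi)$, the $L^p$ part $g_0$ of $\Xi f$ actually lies in $z^{-k}H^p_m$, nor why $\eta_0$ is forced to have $j$-th entry $r_j/q_j$ rather than some other strictly proper function with poles on $\BT$. The mechanism, as in the scalar case, is: multiply the $j$-th row of $\Xi f=g_0+\eta_0$ by $q_j$, write $q_j(\eta_0)_j=\rho_j+r_j$ with $\rho_j\in\Rat_0(\BT)$ and $\deg(r_j)<\deg(q_j)$, and use $\Rat_0(\BT)\cap L^p=\{0\}$ to force $\rho_j=0$; then multiply by $z^k$ to obtain $p_{j1}f_1+\cdots+p_{jj}f_j=z^kq_j(g_0)_j+z^kr_j$, whose left-hand side is in $H^p$, so that $z^kq_j(g_0)_j\in H^p$, and finally divide by $q_j$, which has no zeroes in $\BD$, to conclude $z^k(g_0)_j\in H^p$. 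With these two steps made explicit, your argument coincides with the one in the paper.
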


\begin{proof}[\bf Proof]
The proof is similar to the proof in the scalar case \cite[Lemma 2.3]{GtHJR1}. We will provide a sketch of the proof for completeness. It is clear from the definition of the domain of $T_\Xi$ as in \eqref{TOm}, that the right hand-side of \eqref{DomXi} is contained in $\Dom(T_\Xi)$. Hence it remains to prove the reverse inclusion. Assume $f$ is in $\Dom(T_\Xi)$, that is, $\Xi f=g+\eta$ for some $g\in L^p_m$ and $\eta\in\Rat_0^m(\BT)$. Consider the $j$-th row of $\Xi f=g+\eta$:
\begin{equation}\label{DomCondJ}
\begin{aligned}
&z^{-k}\frac{p_{j1}(z)f_1(z)+\cdots + p_{jj}(z)f_{j}(z)}{q_j(z)}=g_j(z)+\eta_j(z).
\end{aligned}
\end{equation}
Write $q_j \eta_{j}=\rho_j+r_j$ for $\rho_j\in\Rat_0(\BT)$ and $r_j\in\cP_{\deg(q_j)-1}$ and multiply both sides of \eqref{DomCondJ} with $q_j$. This yields
\[
z^{-k}(p_{j1}(z)f_1(z)+\cdots + p_{jj}(z)f_{j}(z))=q_j(z)g_j(z)+ \rho_j(z)+r_j(z).
\]
Note that all elements apart from $\rho_j$ are in $L^p$, which means that $\rho_j$ must be in $L^p$ too. However, $\Rat_0(\BT)\cap L^p=\{0\}$, by \cite[Lemma 2.2]{GtHJR1}, hence $\rho_j=0$. Replacing $\rho_j$ by $0$ and multiplying with $z^k$ on both sides we get
\[
p_{j1}(z)f_1(z)+\cdots + p_{jj}(z)f_{j}(z)=z^k q_j(z)g_j(z)+ z^k r_j(z).
\]
Now observe that all elements apart from $z^k q_j(z)g_j(z)$ are in $H^p$, which means that $z^k q_j(z)g_j(z)$ must be in $H^p$ as well. Since $q_j(z)\neq 0$ for all $z\in\BD$ and $z^kg_j(z)$ is in $L^p$, this further implies that $z^k g_j(z)\in H^p$, or, equivalently, $g_j\in z^{-k} H^p$. Hence we have shown that $g_j\in z^{-k} H^p$ and $\eta_j=r_j/q_j$ with $\deg(r_j)<\deg(q_j)$. Since this holds for $j=1,\ldots,m$, it follows that $f$ is contained in the left hand side of \eqref{DomXi}, and the proof is complete.
\end{proof}

In the scalar case, for $\om(z)=z^k/q(z)\in\Rat(\BT)$ we have $\kernel T_{\om}=\cP_{\deg(q)-k-1}$, by \cite[Lemma 4.1]{GtHJR1}. In the next result we show that for the middle factor $\Xi$ it is still the case that the kernel consists of polynomials only, and we provide a bound on the degree.

\begin{lemma}\label{L:polykern}
Let $\Om\in\Rat^{m\times m}$ with poles on $\BT$ and Wiener-Hopf type factorization \eqref{WHfact1} with middle factor $\Xi$ in \eqref{MiddleFact} of the form \eqref{Xi} with conditions \eqref{XiExtra} in place. Then $\kernel T_\Xi\subset \cP^m$ and for $f=(f_1,\ldots,f_m)^T\in\kernel T_\Xi$ we have
\begin{align}
\deg(f_j)&< \max(\deg(f_1),\ldots,\deg(f_{j-1}),\deg(q_j)+k-k_j)\label{degineq}\\
&\leq \max(\deg(q_1)-k_1-1,\dots,\deg(q_{j-1})-k_{j-1}-1,\deg(q_j)-k_j)+k.\notag
\end{align}
\end{lemma}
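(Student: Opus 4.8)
The plan is to combine Lemma~\ref{L:MidtermDom} with an induction on the row index $j=1,\dots,m$, exploiting the lower-triangular structure of $\Xi$.

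\textbf{Step 1 (describing the kernel).} If $f\in\kernel T_\Xi$, then by Lemma~\ref{L:MidtermDom} we have $\Xi f=z^{-k}h+\eta$ with $h\in H^p_m$ and $\eta_j=r_j/q_j$, $\deg(r_j)<\deg(q_j)$. Since $z^{-k}h\in L^p_m$ and $\eta\in\Rat_0^m(\BT)$, the splitting of $\Xi f$ into an $L^p_m$-part and a $\Rat_0^m(\BT)$-part is unique (as $L^p_m\cap\Rat_0^m(\BT)=\{0\}$), so $T_\Xi f=\BP(z^{-k}h)$. Expanding each $h_j\in H^p$ in its Fourier series, the identity $\BP(z^{-k}h)=0$ is equivalent to the vanishing of all Fourier coefficients of each $h_j$ of index $\ge k$, i.e.\ to $h\in\cP^m_{k-1}$. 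Hence $f\in\kernel T_\Xi$ precisely when $f\in H^p_m$, $\Xi f=z^{-k}h+\eta$ with $h\in\cP^m_{k-1}$ and $\eta_j=r_j/q_j$, $\deg(r_j)<\deg(q_j)$. Once we show each $f_j$ is a polynomial, this already gives $\kernel T_\Xi\subset\cP^m$.

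\textbf{Step 2 (the triangular recursion).} Reading off the $j$-th row of $\Xi f=z^{-k}h+\eta$, multiplying by $z^{k}q_j$ and using $p_{jj}(z)=z^{k_j}$, one obtains the key identity
\[
z^{k_j}f_j \;=\; q_j h_j + z^k r_j - \sum_{i=1}^{j-1} p_{ji}f_i .
\]
I would then induct on $j$. For $j=1$ the right-hand side is a polynomial, so $f_1=z^{-k_1}(q_1h_1+z^kr_1)$; since $f_1\in H^p$ and $k_1\ge0$, the possible pole at $0$ must cancel, whence $f_1\in\cP$, and $\deg(f_1)\le\deg(q_1h_1+z^kr_1)-k_1\le(\deg(q_1)+k-1)-k_1<\deg(q_1)+k-k_1$, using $\deg(h_1)\le k-1$ and $\deg(r_1)<\deg(q_1)$. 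For the inductive step, assuming $f_1,\dots,f_{j-1}\in\cP$ with the asserted bounds, the right-hand side above is again a polynomial, so $f_j\in H^p$ forces (via divisibility by $z^{k_j}$) that $f_j\in\cP$, with $\deg(f_j)$ equal to the degree of that right-hand side minus $k_j$. Bounding $\deg(q_jh_j)$ and $\deg(z^kr_j)$ by $\deg(q_j)+k-1$, and $\deg(p_{ji}f_i)$ by $(k_j-1)+\deg(f_i)$ via $\deg(p_{ji})<k_j$ from \eqref{XiExtra}, yields
\[
\deg(f_j)\le\max\bigl(\deg(q_j)+k-k_j-1,\;\deg(f_1)-1,\dots,\deg(f_{j-1})-1\bigr),
\]
which is exactly \eqref{degineq}. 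Substituting the induction hypothesis for $\deg(f_1),\dots,\deg(f_{j-1})$ into this bound and collapsing the nested maxima (each term $\deg(q_\ell)-k_\ell-1$ being absorbed by a $\deg(q_\ell)-k_\ell$ already present for $\ell\le j-1$) then produces the second, closed-form inequality. Throughout, the convention $\deg 0=-\infty$ covers the degenerate cases (some $f_i$, $h_j$, or $r_j$ vanishing) uniformly.

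I do not expect a serious obstacle. The delicate points are: the implication $\BP(z^{-k}h)=0\Rightarrow h\in\cP^m_{k-1}$; the observation that $z^{-k_j}$ times a polynomial lying in $H^p$ must itself be a polynomial; and --- the most error-prone part --- the bookkeeping with the iterated maxima in the final step, in particular tracking exactly which summands carry the extra $-1$.
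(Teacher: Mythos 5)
Your proposal is correct and follows essentially the same route as the paper's proof: apply Lemma~\ref{L:MidtermDom}, observe that $T_\Xi f=0$ forces the $H^p$-part $h$ to lie in $\cP^m_{k-1}$ (the paper writes $h=z^kg+v$ and concludes $g=0$, which is your Fourier-coefficient argument), then run the triangular recursion $p_{j1}f_1+\cdots+z^{k_j}f_j=q_jh_j+z^kr_j$, use membership in $H^p$ to rule out a pole at $0$ and conclude $f_j\in\cP$, and finish with the same degree bookkeeping via $\deg(p_{ji})<k_j$ and $\deg(r_j)<\deg(q_j)$. (Incidentally, your right-hand side $q_jh_j+z^kr_j$ is the correct one; the paper's proof writes $v_j+z^kr_j$ there, an inconsequential typo.)
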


\begin{proof}[\bf Proof]
Let $f=(f_1,\cdots,f_m)^{T}\in \kernel T_{\Xi}$ and let $j\in \{1,\ldots,m\}$. Assume that for $i=1,\ldots,j-1$ we have $f_i\in \cP$ and the inequalities in \eqref{degineq} hold for $f_i$. We show that also $f_j\in \cP$ and inequality in \eqref{degineq} hold for $f_j$. Since $f\in \kernel T_{\Xi}$, in particular, $f\in\Dom(T_\Xi)$, and by Lemma \ref{L:MidtermDom} we have $\Xi(z) f(z)= z^{-k}h(z)+\rho(z)$ with $h=(h_1,\ldots,h_m)^T\in H^P_m$, and $\rho=(r_1/q_1,\ldots,r_m/q_m)^{T}\in \Rat_0^m(\BT)$, i.e., with $r_j\in\cP_{\deg(q_j)-1}$. Write $h$ as $h(z)=z^kg(z)+v(z)$ with $v\in\cP^m_{k-1}$ and $g\in H^p_m$. Then,
from $T_\Xi f=0$ we see that $g=0$,
which gives that $\Xi(z) f(z)= z^{-k}v(z)+\rho(z)$ with $v=(v_1,\ldots,v_m)^T\in \cP_{k-1}^m$
Now consider the $j$-th entries on both sides of the identity $\Xi(z) f(z)= z^{-k}v(z)+\rho(z)$ and multiply with $z^k q_j(z)$. This yields
\[
p_{j1}(z)f_1(z) + \cdots + p_{jj}(z)f_{j}(z)= v_j(z)+z^k r_j(z),\quad \mbox{with}\quad p_{jj}(z)=z^{k_j}.
\]
Since $f_1,\ldots,f_{j-1},p_{j1},\ldots p_{j(j-1)}, r_j$ and $v_j$ are polynomials, it is clear that $p_{jj}f_j$ must be a polynomial too. Moreover, since $p_{jj}(z)=z^{k_j}$, we have
\[
f_j(z)=\frac{z^{k}r_j(z)-(p_{j1}(z)f_1(z) + \cdots + p_{j (j-1)}(z)f_{j-1}(z))}{z^{k_j}}\in\Rat \cap H^p.
\]
The fact that $f_j\in H^p$, means that $f_j$ cannot have a pole at $0$, so that $z^{k_j}$ must be a factor of $z^{k}r_j(z)-(p_{j1}(z)f_1(z) + \cdots + p_{j (j-1)}(z)f_{j-1}(z))$. That implies that $f_j$ is in $\cP$ with
\[
\deg(f_j)\leq \max(\deg(p_{j1}f_1),\ldots,\deg(p_{j (j-1)}f_{j-1}),\deg(r_j)+k)-k_j.
\]
Note that $\deg(r_j)<\deg(q_j)$ and
\begin{align*}
\deg(p_{ji}f_i)=\deg(p_{ji}) + \deg(f_i)< k_j +\deg(f_i).
\end{align*}
Hence
\[
\deg(f_j)<\max(\deg(f_1),\ldots,\deg(f_{j-1}),\deg(q_j)+k-k_j)
\]
as claimed. Thus the first inequality of \eqref{degineq} holds.

It remains to prove the second inequality of \eqref{degineq}. For $j=1$ it follows immediately that this holds, in fact, with equality. For $j>1$ it remains to show that
\begin{align}
&\max(\deg(f_1),\ldots,\deg(f_{j-1}))\leq\notag \\
&\qquad\qquad \leq\max(\deg(q_1)-k_1-1,\ldots,\deg(q_{j-1})-k_{j-1}-1)+k,\label{ineqinduc}
\end{align}
where by assumption the inequality holds if $j$ is replaced by $i=1,\ldots,j-1$. Applying the first inequality in \eqref{degineq} to $f_{j-1}$, it follows that
\begin{align*}
&\max(\deg(f_1),\ldots,\deg(f_{j-2}),\deg(f_{j-1}))\leq\\
&\qquad \leq\max(\deg(f_1),\ldots,\deg(f_{j-2}),\\
&\qquad \qquad \max(\deg(f_1)-1,\cdots,\deg(f_{j-2})-1,\deg(q_{j-1})+k-k_{j-1}-1))\\
&\qquad\qquad = \max(\deg(f_1),\ldots,\deg(f_{j-2}), \deg(q_{j-1})+k-k_{j-1}-1).
\end{align*}

Now simply use the upper bound on $\max(\deg(f_1),\ldots,\deg(f_{j-2}))$ from the induction hypothesis to conclude that \eqref{ineqinduc} holds, and hence the second inequality in \eqref{degineq}.
\end{proof}

Since the kernel of $T_\Xi$ is contained in $\cP^m$, it follows that the scalar entries in $f\in\kernel T_\Xi$ are in the domain of the Toeplitz operators of the scalar entries of $\Xi$, because by \cite[Proposition 2.1]{GtHJR1} $\cP$ is always contained in their domain. As a consequence, we can formulate the kernel condition of $T_\Xi$ in terms of the Toeplitz-like operators corresponding to the scalar entries of $\Xi$.

\begin{corollary}\label{C:Kern1}
Let $\Om\in\Rat^{m\times m}$ with poles on $\BT$ and Wiener-Hopf type factorization \eqref{WHfact1} with middle factor $\Xi$ in \eqref{MiddleFact} in the form \eqref{Xi} with conditions \eqref{XiExtra} in place. For $f=(f_1,\cdots,f_m)^T\in\kernel T_\Xi$ we have
\[
0= T_\Xi f
=\pmat{T_{z^{-k}\frac{p_{11}}{q_1}}f_1\\
T_{z^{-k}\frac{p_{21}}{q_2}}f_1+ T_{z^{-k}\frac{p_{22}}{q_2}}f_2 \\
\vdots \\
T_{z^{-k}\frac{p_{m1}}{q_m}}f_1 +  \cdots + T_{z^{-k}\frac{p_{m(m-1)}}{q_m}} f_{m-1}  + T_{z^{-k}\frac{p_{mm}}{q_m}}f_m}.
\]
\end{corollary}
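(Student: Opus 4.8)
The plan is to reduce the matrix identity to $m$ scalar identities, one per row, and in each row to distribute the Riesz projection over the finite sum of scalar terms. What makes this legitimate is that, by Lemma \ref{L:polykern}, the components $f_1,\ldots,f_m$ of $f$ are polynomials, so each lies in the domain of every scalar Toeplitz-like operator that appears.

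First I would apply Lemma \ref{L:polykern} to obtain $f=(f_1,\ldots,f_m)^T\in\cP^m$. Writing $\xi_{ji}(z):=z^{-k}p_{ji}(z)/q_j(z)$ for the $(j,i)$ entry of $\Xi$ (so $\xi_{ji}=0$ for $i>j$), each $\xi_{ji}$ is a scalar rational function and $\cP\subset\Dom(T_{\xi_{ji}})$ by \cite[Proposition 2.1]{GtHJR1}; in particular $f_i\in\Dom(T_{\xi_{ji}})$, so we may write $\xi_{ji}f_i=h_{ji}+\eta_{ji}$ with $h_{ji}\in L^p$ and $\eta_{ji}\in\Rat_0(\BT)$, and then $T_{\xi_{ji}}f_i=\BP h_{ji}$.

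Next, fix $j$. Summing these decompositions over $i=1,\ldots,j$, the $j$-th component of $\Xi f$ equals $\sum_{i=1}^{j}h_{ji}+\sum_{i=1}^{j}\eta_{ji}$. On the other hand $f\in\Dom(T_\Xi)$, so by \eqref{TOm} we may also write $\Xi f=g+\eta$ with $g\in L^p_m$, $\eta\in\Rat_0^m(\BT)$ and $T_\Xi f=\BP g$; looking at the $j$-th component gives $(\Xi f)_j=g_j+\eta_j$ with $g_j\in L^p$, $\eta_j\in\Rat_0(\BT)$ and $(T_\Xi f)_j=\BP g_j$. Equating the two expressions for $(\Xi f)_j$ and rearranging, $\sum_{i=1}^{j}h_{ji}-g_j=\eta_j-\sum_{i=1}^{j}\eta_{ji}$; the left side lies in $L^p$ and the right side in $\Rat_0(\BT)$, so both vanish because $L^p\cap\Rat_0(\BT)=\{0\}$ by \cite[Lemma 2.2]{GtHJR1}. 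Hence $g_j=\sum_{i=1}^{j}h_{ji}$, and applying the (linear) Riesz projection yields $(T_\Xi f)_j=\BP g_j=\sum_{i=1}^{j}\BP h_{ji}=\sum_{i=1}^{j}T_{\xi_{ji}}f_i$.

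Running over $j=1,\ldots,m$ assembles the displayed column vector, which is $0$ since $f\in\kernel T_\Xi$. The step I would treat most carefully is the splitting in the previous paragraph: without first knowing $f_i\in\cP$ one could not guarantee that each product $\xi_{ji}f_i$ splits as an $L^p$ function plus an element of $\Rat_0(\BT)$, and it is the combination of Lemma \ref{L:polykern} with the triviality of $L^p\cap\Rat_0(\BT)$ that both makes the term-by-term decomposition meaningful and forces the $\Rat_0(\BT)$-parts to add up correctly so that $\BP$ may be pulled through the finite sum.
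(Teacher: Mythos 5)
Your proof is correct and follows essentially the same route as the paper: both arguments first invoke Lemma \ref{L:polykern} to get $f\in\cP^m$ so that each $f_i$ lies in the domain of every scalar operator (via \cite[Proposition 2.1]{GtHJR1}), and then match the termwise decompositions of $\xi_{ji}f_i$ against the decomposition of $(\Xi f)_j$, using $L^p\cap\Rat_0(\BT)=\{0\}$ to force the $\Rat_0(\BT)$-parts to align. The only cosmetic difference is that the paper splits each term three ways (into $K^p$, $H^p$ and $\Rat_0(\BT)$ components) while you split two ways and apply $\BP$ at the end; the substance is identical.
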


\begin{proof}[\bf Proof]
Since $f=(f_1,\cdots,f_m)^{T}\in\kernel T_\Xi$, it follows from Lemma \ref{L:polykern} that each $f_j$ is in $\cP$ and hence the right-hand side of the formula is well defined, because $\Dom(T_{\frac{p_{ij}}{q_i}})$ contains $\cP$ for all $1\leq i\leq j\leq m$ by \cite[Proposition 2.1]{GtHJR1}. That the identity holds is because $H^p$, $K^p:=\ov{\textup{span}}\{e^{ikt}\colon k\in\{-1,-2,\ldots\} \}\subset L^p$ and $\Rat_0(\BT)$ are vector spaces. Indeed, for the $k$-th entry one needs to inspect
\[
z^{-k}\left(\frac{p_{k1}(z)}{q_k(z)}f_1(z) + \cdots + \frac{p_{kk}(z)}{q_k(z)}f_{k}(z)\right).
\]
However, since for each $j$, $f_j$ is in the domain of $T_{z^{-k}\frac{p_{kj}}{q_k}}$, the $j$-th summand $z^{-k}\frac{p_{kj}(z)}{q_k(z)}f_j(z)$ is equal to $g_j+h_j+\rho_j$ with $g_j\in K^{p}$, $h_j\in H^p$, $\rho_j\in \Rat_0(\BT)$ all uniquely determined and with $h_j=T_{z^{-k}\frac{p_{kj}}{q_k}}f_j$. The sum itself then also comes in the form of a sum of an element $g_1+\cdots+g_k\in K^{p}$, $h_1+\cdots+h_k\in H^p$ and $\rho_1+\cdots+\rho_k\in \Rat_0(\BT)$ and the function $h_1+\cdots+h_k$ corresponds to the $k$-th entry in $T_\Xi f$ while also corresponding to the $k$-th entry in the right-hand side of the equation. Hence the identity follows.
\end{proof}

%%%%%%%%%%%%%%%%%%%%%%%%%%%%%%%%%%%%%%%%%%%%%%%%%%%%%%%%%%%%%%%%%%%%%%%%%%%%%%%%
\section{Proof of Theorem \ref{T:Main} and determination of the factorization indices}\label{S:MainTheorem}

We begin this section with a proof of the first part of Theorem \ref{T:Main}.

\begin{proposition}\label{P:kernel2}
Let $\Om\in\Rat^{m\times m}$ with poles on $\BT$ and Wiener-Hopf type factorization \eqref{WHfact1} with middle factor $\Xi$ in \eqref{MiddleFact} in the form \eqref{Xi} with conditions \eqref{XiExtra} in place.  Assume that the factorization of $\Om$ is in special form (as in Definition \ref{D:SpecialForm}). Then
\[
\dim \kernel T_{\Om} =\dim \kernel T_\Xi  = \sum_{j=1}^m \max(k + \deg(q_j) - k_j, 0).
\]
In fact,
\[
\kernel T_\Xi=\oplus_{j=1}^m \cP_{k+\deg(q_j) -k_j-1}.
\]
\end{proposition}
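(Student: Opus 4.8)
The plan is to prove the three claims together, in the order: first the structural description $\kernel T_\Xi = \oplus_{j=1}^m \cP_{k+\deg(q_j)-k_j-1}$, from which the dimension formula for $\kernel T_\Xi$ is immediate (recall $\dim \cP_N = N+1$ if $N\geq 0$ and $\{0\}$ otherwise, i.e. $\dim\cP_N = \max(N+1,0)$), and finally the equality $\dim\kernel T_\Om = \dim\kernel T_\Xi$, which follows from part (i) of the second quoted theorem together with the invertibility of $T_{\Om_-}$, $T_{\Om_+}$ (with inverses $T_{\Om_-^{-1}}$, $T_{\Om_+^{-1}}$) and $T_{z^{-k}}$ being injective, so that $\kernel T_\Om \cong \kernel T_\Xi$ via $f \mapsto T_{\Om_+} f$. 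So the heart of the matter is the description of $\kernel T_\Xi$.

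For the description of $\kernel T_\Xi$, I would use Corollary \ref{C:Kern1}: an element $f=(f_1,\ldots,f_m)^T$ lies in $\kernel T_\Xi$ if and only if each $f_j\in\cP$ (by Lemma \ref{L:polykern}) and the triangular system of scalar equations $\sum_{i=1}^j T_{z^{-k}p_{ji}/q_j} f_i = 0$ holds for $j=1,\ldots,m$. I would proceed by induction on $j$. The base case and the inductive mechanism both rest on the scalar fact (quoted from \cite[Lemma 4.1]{GtHJR1}) that $\kernel T_{z^{k_j}/q_j} = \cP_{\deg(q_j)-k_j-1}$; here the operator appearing is $T_{z^{-k}p_{jj}/q_j} = T_{z^{k_j-k}/q_j}$, whose kernel is $\cP_{k+\deg(q_j)-k_j-1}$. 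The claim is that $f\in\kernel T_\Xi$ iff each $f_j\in\cP_{k+\deg(q_j)-k_j-1}$ \emph{independently}, i.e. the off-diagonal terms impose no further constraints once the diagonal ones are satisfied. The ``if'' direction: suppose $f_j\in\cP_{k+\deg(q_j)-k_j-1}$ for all $j$; then the diagonal term $T_{z^{-k}p_{jj}/q_j}f_j = 0$, and I must check that each off-diagonal term $T_{z^{-k}p_{ji}/q_j}f_i$ also vanishes ($i<j$). Here I would compute $z^{-k}\frac{p_{ji}(z)}{q_j(z)}f_i(z)$ and show its Riesz projection onto $H^p$ is zero: since $\deg(f_i)\leq k+\deg(q_i)-k_i-1$ and $\deg(p_{ji})<k_j$ (by \eqref{XiExtra}), and using that $q_i \mid q_j$, the special form condition \eqref{SpecialFormId} ($k_j - \deg(q_j) \geq \deg(p_{ij'}) - \deg(q_{j'})$, applied appropriately) is exactly what guarantees $\deg(p_{ji}f_i) - k < \deg(q_j)$, so that $z^{-k}p_{ji}f_i/q_j$ is a strictly proper rational function with poles only on $\BT$, hence in $\Rat_0(\BT)$, hence killed by the Toeplitz operator. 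For the ``only if'' direction I induct: given $f\in\kernel T_\Xi$, the $j=1$ equation forces $f_1\in\kernel T_{z^{k_1-k}/q_1} = \cP_{k+\deg(q_1)-k_1-1}$; assuming $f_1,\ldots,f_{j-1}$ all lie in the respective spaces, the off-diagonal terms in equation $j$ vanish by the ``if''-direction computation just described, leaving $T_{z^{-k}p_{jj}/q_j}f_j = 0$, i.e. $f_j\in\cP_{k+\deg(q_j)-k_j-1}$.

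I expect the main obstacle to be the off-diagonal degree bookkeeping — verifying cleanly that the special form hypothesis \eqref{SpecialFormId}, combined with $\deg(f_i)\leq k+\deg(q_i)-k_i-1$, $\deg(p_{ji})<k_j$, and the divisibility $q_i\mid q_j$, forces $z^{-k}p_{ji}f_i/q_j \in \Rat_0(\BT)$ so that its contribution to the Toeplitz operator is zero. Concretely one needs $\deg\big(z^k p_{ji} f_i\big) < \deg(q_j) + 2k$ is not quite the right phrasing; rather, writing the term as $z^{-k}\cdot(p_{ji}f_i/q_j)$, one wants $p_{ji}f_i/q_j$ to be strictly proper after accounting for the $z^{-k}$ factor, i.e. $\deg(p_{ji}) + \deg(f_i) - \deg(q_j) < k$, which reads $\deg(p_{ji}) + \deg(f_i) < \deg(q_j) + k$. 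Using $\deg(f_i) \leq k + \deg(q_i) - k_i - 1$ this becomes $\deg(p_{ji}) + \deg(q_i) - k_i - 1 < \deg(q_j)$, i.e. $\deg(p_{ji}) - k_i \leq \deg(q_j) - \deg(q_i)$, and I would need to deduce this from \eqref{SpecialFormId} (with indices matched: \eqref{SpecialFormId} gives $k_i - \deg(q_i) \geq \deg(p_{ji}) - \deg(q_j)$ for $j>i$, which rearranges to exactly $\deg(p_{ji}) - k_i \leq \deg(q_j) - \deg(q_i)$). One must also carefully handle the degenerate cases where some $\cP_{k+\deg(q_j)-k_j-1}=\{0\}$ (i.e. $k+\deg(q_j)-k_j-1<0$), and confirm that the direct-sum decomposition is genuinely a decomposition of vector spaces (the $j$-th summand sits in the $j$-th coordinate), so that dimensions add, yielding $\sum_j \max(k+\deg(q_j)-k_j,0)$.
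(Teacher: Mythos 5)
Your proposal is correct and follows essentially the same route as the paper's proof: an induction over the rows in which the special form condition \eqref{SpecialFormId}, combined with $\deg(f_i)<\deg(q_i)+k-k_i$ and the scalar fact that $T_{z^{-k}p_{ji}/q_j}g=0$ exactly when $\deg(p_{ji}g)<\deg(q_j)+k$, shows the off-diagonal terms impose no constraints beyond the diagonal ones; the paper phrases this via the polynomial identity obtained from Lemma \ref{L:MidtermDom} rather than via Corollary \ref{C:Kern1}, but the degree bookkeeping is identical. One peripheral slip: $T_{z^{-k}}$ is not injective for $k>0$ (it is surjective with kernel $\cP^m_{k-1}$), so the equality $\dim\kernel T_{\Om}=\dim\kernel T_\Xi$ should instead be justified by $T_\Om=T_{\Om_-}T_\Xi T_{\Om_+}$ with $T_{\Om_\pm}$ invertible, i.e.\ by the statement quoted from \cite{GtHJR21} in the introduction, as the paper does.
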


\begin{proof}[\bf Proof]
Let $\Xi$ be the middle factor in the Wiener-Hopf type factorization of $\Omega$, given by \eqref{MiddleFact}-\eqref{Xi}. By assumption, the conditions of Definition \ref{D:SpecialForm} hold, in particular, we have \eqref{SpecialFormId}. The result follows once we have proved the formula for $\kernel T_\Xi$, which we will prove by induction. More precisely, we will prove by induction that for $f=(f_1\ldots,f_m)^T\in\Dom(T_\Xi)$ the first $l$ entries of $T_\Xi f$ are zero if and only if $f_j\in  \cP_{k+\deg(q_j) -k_j-1}$ for $j=1,\ldots,l$. To this end, let $f=(f_1, \ldots , f_m)^{T}\in\Dom(T_\Xi)$, so that we have $\Xi(z) f(z)=h(z)+z^{-k}v(z)+\rho(z)$ with $h=(h_1, \ldots , h_m)^{T}\in H_m^p$, $v=(v_1,\ldots,v_m)^{T}\in \cP^m_{k-1}$ and $\rho=(r_1/q_1,\ldots,r_m/q_m)^{T} \in \Rat_0^m(\BT)$, i.e., with $r_j\in\cP_{\deg(q_j)-1}$, according to Lemma \ref{L:MidtermDom}. Now assume $l\leq m$ such that the claim holds for $j=1,\ldots,l-1$.

First assume that the first $l$ entries of $T_\Xi f$ are zero, i.e., $h_1=\cdots=h_l=0$. In particular, since the first $l-1$ entries are zero, we know that
$\deg(f_j)<\deg(q_j)+k -k_j$ for $j=1,\ldots,l-1$, by the induction assumption. Multiplying the $l$-th component of $\Xi(z) f(z)=h(z)+z^{-k}v(z)+\rho(z)$ with $z^k q_l(z)$ yields
\begin{equation}\label{kerneq l}
p_{l1}(z)f_1(z) + \cdots  + p_{ll}(z)f_l(z) = q_l(z) v_l(z) + z^k r_l(z).
\end{equation}
Since $\deg(r_l)<\deg(q_l)$  and $\deg(v_l)<k$, we have
\[
\deg(q_l(z) v_l(z) + z^k r_l(z))<\deg(q_l)+k.
\]
The induction assumption gives $\deg(f_j)<\deg(q_j)+k -k_j$. Together with the condition \eqref{SpecialFormId}, for $j=1,\ldots,l-1$ this implies that
\[
\deg(p_{l j}f_j)=\deg(p_{l j})+\deg(f_j) < \deg(p_{l j}) + \deg(q_j)+k-k_j  \leq \deg(q_l) +k.
\]
Thus, the right hand-side and the first $l-1$ summands in the left-hand side of \eqref{kerneq l} are all polynomials of degree less than $\deg(q_l)+k$. Then also $p_{ll(z)}f_l(z)=z^{k_l}f_l(z)$ must be a polynomial of degree less than $\deg(q_l)+k$, which implies that $f_l\in \cP$ with $\deg(f_l)<\deg(q_l)+k-k_l$.

Conversely, assume that $\deg(f_j)<\deg(q_j)+k -k_j$ for $j=1,\ldots,l$. The recursion assumption implies that the first $l-1$ entries of $T_\Xi f$ are zero. Again, multiplying the $l$-th component of $\Xi(z) f(z)=h(z)+z^{-k}v(z)+\rho(z)$ with $z^k q_l(z)$ yields
\begin{equation}\label{kerneq l2}
p_{l1}(z)f_1(z) + \cdots + p_{ll}(z)f_l(z) = z^kq_l(z)h_l(z) + q_l(z) v(z) +  z^k r_l(z).
\end{equation}
Since $p_{ll}(z)=z^{k_l}$ and $\deg(f_l)<\deg(q_l)+k -k_j$, we have $\deg(p_{ll}f_l)<\deg(q_l)+k$. As above, $\deg(q_l(z) v_l(z) + z^k r_l(z))<\deg(q_l)+k$ and from \eqref{SpecialFormId} we again obtain that $\deg(p_{l j}f_j)< \deg(q_l)+k$. The identity then implies that $z^kq_l(z)h_l(z)\in \cP$ with $\deg(z^kq_lh_l)<\deg(q_l)+k$. However, that can only happen when $h_l=0$, which implies that the $l$-th entry in $T_\Xi f$ is zero as well. Since the statement holds for $l=1,\ldots,m$, the proof is complete.
\end{proof}

In addition to the first claim of Theorem \ref{T:Main}, in the above proposition we have also shown that the kernel of $T_\Xi$ splits as the direct sum of $m$ subspaces of $H^p$ in the case that the factorization of $\Om$ is in special form. This, however, does not imply that the domain of $T_\Xi$ splits in a similar fashion.

Next we prove the second claim of Theorem \ref{T:Main}.

\begin{proposition}\label{P:MainPt2}
Let $\Om\in\Rat^{m\times m}$ with poles on $\BT$ and Wiener-Hopf type factorization \eqref{WHfact1} with middle factor $\Xi$ in \eqref{MiddleFact} in the form \eqref{Xi} with conditions \eqref{XiExtra} in place.  Assume that the factorization of $\Om$ is in special form (as in Definition \ref{D:SpecialForm})and that $T_\Om$ is Fredholm. Then
\[
\dim \kernel T_{\Om} = \sum_{j=1}^m \max(k + \deg(q_j) - k_j, 0)
\]
and
\[
\codim \Ran T_{\Om} = \sum_{j=1}^m \max(k_j - \deg(q_j) - k, 0).\]
\end{proposition}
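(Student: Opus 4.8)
The plan is to combine Proposition \ref{P:kernel2}, which already gives the formula for $\dim \kernel T_\Om$ without any Fredholmness assumption, with the index formula from Theorem 1.4 (part (iii) of the second theorem above). Since $T_\Om$ is assumed Fredholm, both $\dim \kernel T_\Om$ and $\codim \Ran T_\Om$ are finite, and we have the relation
\[
\Index(T_\Om) = \dim \kernel T_\Om - \codim \Ran T_\Om.
\]
So the strategy is: first quote $\dim \kernel T_\Om = \sum_{j=1}^m \max(k + \deg(q_j) - k_j, 0)$ from Proposition \ref{P:kernel2}; then recall that, under the Fredholm hypothesis, Theorem 1.4(iii) gives $\Index(T_\Om) = mk + \sum_{j=1}^m \deg(q_j) - \sum_{j=1}^m k_j = \sum_{j=1}^m (k + \deg(q_j) - k_j)$; and finally solve for $\codim \Ran T_\Om$.

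The one computational step that needs care is the elementary identity
\[
\sum_{j=1}^m (k + \deg(q_j) - k_j) = \sum_{j=1}^m \max(k + \deg(q_j) - k_j, 0) - \sum_{j=1}^m \max(k_j - \deg(q_j) - k, 0),
\]
which is just the decomposition $a = \max(a,0) - \max(-a,0)$ applied termwise with $a = k + \deg(q_j) - k_j$. Substituting the index formula and the kernel formula into $\codim \Ran T_\Om = \dim \kernel T_\Om - \Index(T_\Om)$ then yields exactly
\[
\codim \Ran T_\Om = \sum_{j=1}^m \max(k_j - \deg(q_j) - k, 0),
\]
as claimed. I would also note at the outset that the Fredholmness of $T_\Om$ is equivalent to that of $T_\Xi$ (with the same kernel dimension, cokernel codimension and index), as recorded in the discussion preceding Theorem 1.4, so that the index formula is available in the form stated.

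I do not expect a genuine obstacle here: once Proposition \ref{P:kernel2} and the index formula are in hand, this proposition is a one-line bookkeeping argument using $\Index = \dim\kernel - \codim\Ran$ for Fredholm operators. The only thing to be mindful of is that the index formula from \cite{GtHJR21} (Theorem 1.4(iii)) was itself derived from the factorization $T_\Om = T_{\Om_-} T_{z^{-k}} T_{\Om_0} T_{P_0} T_{\Om_+}$ and the invertibility of $T_{\Om_\pm}$; so I would make explicit that we are entitled to use it under the present hypotheses (Fredholmness of $T_\Om$, equivalently all $s_j$ constant in the notation of that theorem, which is consistent with $p_{ii}(z) = z^{k_i}$ in \eqref{Xi}). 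With that remark, the proof is complete.
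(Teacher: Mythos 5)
Your proposal is correct and is essentially identical to the paper's own proof: both quote the kernel formula from Proposition \ref{P:kernel2}, invoke the index formula $\Index(T_\Om)=mk+\sum_j\deg(q_j)-\sum_j k_j$ from Theorem 1.4 of \cite{GtHJR21}, and recover the codimension of the range via $\codim\Ran T_\Om=\dim\kernel T_\Om-\Index(T_\Om)$ together with the termwise identity $a=\max(a,0)-\max(-a,0)$. No gaps.
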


\begin{proof}[\bf Proof]
The formula for $\dim \kernel T_{\Om}$ was already established in Proposition \ref{P:kernel2}. Since $T_\Om$ is assumed to be Fredholm, according to Theorem 1.4 in \cite{GtHJR21}, the Fredholm index is given by
\[
\Index T_{\Om} = \dim \kernel T_{\Om} - \codim \Ran T_{\Om} =km + \sum_{j=1}^m \deg(q_j)  - \sum_{j=1}^m k_j.
\]
Using the formula for $\dim \kernel T_{\Om}$ from Proposition \ref{P:kernel2} it then follows that
\begin{align*}
&\codim \Ran T_{\Om}
 = \dim \kernel T_{\Om} - \Index T_{\Om}\\
&\qquad\quad =  \sum_{j=1}^m \max(k + \deg(q_j) - k_j, 0) - ( km + \sum_{j=1}^m \deg(q_j)  - \sum_{j=1}^m k_j )\\
&\qquad\quad = \sum_{j=1}^m \max(k_j - \deg(q_j) - k , 0).\qedhere
\end{align*}
\end{proof}

The numbers
\begin{equation}\label{kappaj}
\kappa_j:=k_j-\deg(q_j)-k,\quad j=1,\ldots,m,
\end{equation}
that appear in the formulas for the dimensions of the kernel and cokernel of the Fredholm operator $T_\Om$ are called the {\em factorization indices} of $T_\Om$. Note that we can now write
\[
\dim \kernel T_{\Om} = \sum_{\kappa_j\leq 0} - \kappa_j \ands
\codim \Ran T_{\Om} = \sum_{\kappa_j\geq 0} \kappa_j.
\]
Although the Wiener-Hopf type factorization \eqref{WHfact1} of $\Om$ is far from unique, following an argument on page 590 in \cite{GGK2}, we can show that factorization indices are uniquely determined by $T_\Om$, provided that we can find a Wiener-Hopf type factorization in special form.

\begin{proposition}\label{P:indices}
Let $\Om\in\Rat^{m\times m}$ with poles on $\BT$ and Wiener-Hopf type factorization \eqref{WHfact1} with middle factor $\Xi$ in \eqref{MiddleFact} in the form \eqref{Xi}.  Assume that the factorization of $\Om$ is in special form (as in Definition \ref{D:SpecialForm}) and that $T_\Om$ is Fredholm. Let $\kappa_1,\ldots,\kappa_m$ be the factorization indices given by \eqref{kappaj}. Then for each integer $\kappa$ we have
\begin{align*}
\#\{j\colon \kappa_j
=\kappa\} &= \codim \Ran(T_{z^{1-\kappa}\Om}) - 2\,\codim \Ran(T_{z^{-\kappa}\Om}) +\\
&\qquad\qquad + \codim \Ran(T_{z^{-\kappa-1}\Om}).
\end{align*}
\end{proposition}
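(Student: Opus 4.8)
\emph{Plan of proof.} The plan is to reduce the identity to a single elementary observation: multiplying the symbol $\Om$ by the scalar monomial $z^{-\nu}$ shifts every factorization index by $-\nu$, while preserving both the special form of a Wiener--Hopf type factorization and the Fredholmness of the associated Toeplitz-like operator. Granting this, the right-hand side of the asserted formula is nothing but the discrete second difference at $\kappa$ of the function
\[
g(\nu):=\codim\Ran\big(T_{z^{-\nu}\Om}\big),\qquad \nu\in\BZ,
\]
and Proposition \ref{P:MainPt2} will give $g(\nu)=\sum_{j=1}^m\max(\kappa_j-\nu,0)$, a sum of truncated linear functions whose second difference is immediate.

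\emph{Step 1: shifting the factorization.} Fix an integer $\nu$ and start from a special-form factorization $\Om(z)=z^{-k}\Om_-(z)\Om_0(z)P_0(z)\Om_+(z)$ as in Theorem \ref{T:fact}, with middle factor $\Xi$ of the form \eqref{Xi} satisfying \eqref{XiExtra} and \eqref{SpecialFormId}. Put $k':=\max(0,k+\nu)\ge 0$ and $P_0'(z):=z^{k'-k-\nu}P_0(z)$; since $k'-k-\nu\ge 0$, $P_0'$ is again a lower-triangular matrix polynomial, its $j$-th diagonal entry is $z^{k_j'}$ with $k_j':=k_j+(k'-k-\nu)$, and $\det P_0'(z)=z^{N'}$ with $N':=N+m(k'-k-\nu)\ge 0$. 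Hence
\[
z^{-\nu}\Om(z)=z^{-k'}\Om_-(z)\Om_0(z)P_0'(z)\Om_+(z)
\]
is a Wiener--Hopf type factorization of $z^{-\nu}\Om$ in the sense of Theorem \ref{T:fact}, with the same $\Om_-,\Om_+,\Om_0$ and the same denominators $q_j$. I then check that conditions \eqref{XiExtra} and \eqref{SpecialFormId} transfer: writing $p_{ij}'(z)=z^{k'-k-\nu}p_{ij}(z)$ we get $\deg(p_{ij}')=(k'-k-\nu)+\deg(p_{ij})<(k'-k-\nu)+k_i=k_i'$; the divisibility $q_i\mid q_{i+1}$ is untouched; and \eqref{SpecialFormId} for the new data, namely $k_n'-\deg(q_n)\ge\deg(p_{jn}')-\deg(q_j)$, reduces after cancelling $k'-k-\nu$ from both sides to \eqref{SpecialFormId} for $\Om$. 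Thus $z^{-\nu}\Om$ admits a special-form factorization whose factorization indices are $k_j'-\deg(q_j)-k'=k_j-\deg(q_j)-k-\nu=\kappa_j-\nu$. Finally, since $z^{-\nu}$ has neither zeros nor poles on $\BT$, the symbol $z^{-\nu}\Om$ has a zero on $\BT$ if and only if $\Om$ does; as $T_\Om$ is Fredholm, $\Om$ has no zeros on $\BT$, hence neither does $z^{-\nu}\Om$, so $T_{z^{-\nu}\Om}$ is Fredholm (equivalently, the co-prime numerators in $\Om_0$ are unchanged and all constant).

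\emph{Step 2: second difference.} By Step 1 the hypotheses of Proposition \ref{P:MainPt2} hold for $z^{-\nu}\Om$ for every $\nu\in\BZ$, whence $g(\nu)=\codim\Ran(T_{z^{-\nu}\Om})=\sum_{j=1}^m\max(\kappa_j-\nu,0)$. Since $z^{1-\kappa}=z^{-(\kappa-1)}$ and $z^{-\kappa-1}=z^{-(\kappa+1)}$, the right-hand side of the proposition equals $g(\kappa-1)-2g(\kappa)+g(\kappa+1)$. For a single summand $h_a(\nu):=\max(a-\nu,0)$ one verifies directly, distinguishing the cases $a\ge\nu+1$, $a=\nu$ and $a\le\nu-1$, that $h_a(\nu-1)-2h_a(\nu)+h_a(\nu+1)$ equals $1$ when $a=\nu$ and $0$ otherwise. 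Summing over $j$ with $\nu=\kappa$ yields $g(\kappa-1)-2g(\kappa)+g(\kappa+1)=\#\{j\colon\kappa_j=\kappa\}$, which is the assertion.

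\emph{Main obstacle.} There is no serious obstacle. The only point requiring care is Step 1 in the regime $k+\nu<0$, where one cannot simply replace $k$ by $k+\nu$ (it must stay nonnegative) and instead must absorb the monomial into $P_0$ as above and re-verify conditions (i)--(iv) of Theorem \ref{T:fact} together with \eqref{XiExtra} and \eqref{SpecialFormId} for the modified data; everything else is the routine second-difference computation, exactly as in the classical argument on page 590 of \cite{GGK2}.
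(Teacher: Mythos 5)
Your proof is correct and follows essentially the same route as the paper: shift the symbol to $z^{-\nu}\Om$, observe that the factorization indices all shift by $-\nu$ while special form and Fredholmness are preserved, apply the codimension formula of Proposition \ref{P:MainPt2}, and read off the count as a discrete second difference. Your Step 1 is in fact slightly more careful than the paper's (which simply declares $z^{-\kappa}\Xi$ to be an admissible middle factor), since you handle the regime $k+\nu<0$ by absorbing the monomial into $P_0$ and re-verifying \eqref{XiExtra} and \eqref{SpecialFormId}.
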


\begin{proof}[\bf Proof] For $\kappa\in\BZ$, set $\Om_\kappa(z):=z^{-\kappa}\Om(z)\in \Rat^{m\times m}$. Since $T_\Om$ is Fredholm, by assumption, so is $T_{\Om_\kappa}$, since no zeroes on $\BT$ are introduced. Moreover, $\Om_\kappa$ has a Wiener-Hopf type factorization \eqref{WHfact1} with the same plus function $\Om_+(z)$ and minus function $\Om_-(z)$ and with middle term $z^{-\kappa}\Xi(z)$. Note that the Wiener-Hopf type factorization of $\Om_\kappa$ is also in special form and has factorization indices
\[
\wtil{\kappa}_j= k_j-\deg(q_j)-k-\kappa,\quad j=1,\ldots,m.
\]
Hence, we have
\[
d(T_{\Om_\kappa}):=\codim \Ran T_{\Om_\kappa} = \sum_{\kappa_j\geq \kappa} \kappa_j - \kappa = \sum_{\kappa_j> \kappa} \kappa_j - \kappa =\sum_{\kappa_j\geq \kappa+1} \kappa_j - \kappa.
\]
This implies that
\begin{align*}
d(T_{\Om_\kappa})-d(T_{\Om_{\kappa+1}})
&= \sum_{\kappa_j\geq \kappa} \kappa_j - \kappa - \sum_{\kappa_j\geq \kappa +1} \kappa_j - \kappa -1\\
&=\sum_{\kappa_j\geq \kappa+1} \kappa_j - \kappa - \sum_{\kappa_j\geq \kappa +1} \kappa_j - \kappa -1\\
&=\sum_{\kappa_j\geq \kappa +1} 1 = \#\{j \colon \kappa_j\geq \kappa +1\}.
\end{align*}
Therefore, we have
\begin{align*}
\#\{j\colon \kappa_j=\kappa\}
&= \#\{j \colon \kappa_j\geq \kappa \} - \#\{j \colon \kappa_j\geq \kappa +1\}\\
&=(d(T_{\Om_{\kappa-1}})-d(T_{\Om_{\kappa}})) - (d(T_{\Om_\kappa})-d(T_{\Om_{\kappa+1}}))\\
&=d(T_{\Om_{\kappa-1}}) - 2 d(T_{\Om_{\kappa}})) + d(T_{\Om_{\kappa+1}}),
\end{align*}
which proves our claim.
\end{proof}

%%%%%%%%%%%%%%%%%%%%%%%%%%%%%%%%%%%%%%%%%%%%%%%%%%%%%%%%%%%%%%%%%%%%%%%%%%%%%%%%
\section{The $2\times 2$ case}\label{S:2x2}

In this section we consider the general $2 \times 2$ case. Hence, let $\Om\in\Rat^{2 \times 2}$ with poles on $\BT$ with a Wiener-Hopf type factorization \eqref{WHfact1} with middle factor $\Xi$ in \eqref{MiddleFact} of the form \eqref{Xi}, that is
\begin{equation}\label{2x2}
\Xi(z)=\pmat{\xi_{11}(z)&0\\\xi_{21}(z)&\xi_{22}(z)}
=z^{-k}\begin{pmatrix}
\frac{z^{k_1}}{q_1(z)} & 0 \\[2mm]
\frac{z^{k_{21}}d_{21}(z)}{q_2(z)} & \frac{z^{k_2}}{q_2(z)}
\end{pmatrix},
\end{equation}
where we also make the assumption in \eqref{XiExtra}:
\begin{equation}\label{2x2con}
k_{21} +\deg (d_{21}) < k_2,\quad d_{21}(0)\not= 0,\quad q_1 | q_2.
\end{equation}
We do not assume that the Wiener-Hopf type factorization of $\Om$ is in special form, but we will discuss this case in Corollary \ref{C:2x2special} below.

As explained in the general case, $\dim \kernel T_\Om=\dim \kernel T_\Xi$, hence it suffices to analyse the kernel of $T_\Xi$. Using Corollary \ref{C:Kern1} we can make a few direct observations regarding $\kernel T_\Xi$:
\begin{itemize}
\item[(i)] If $\kernel T_{\xi_{11}}=\{0\}$, or, equivalently, $k_1\geq \deg(q_1)+k$, then
\[
\kernel T_\Xi= \{0\}\oplus \kernel T_{\xi_{22}}=\kernel T_{\xi_{11}} \oplus \kernel T_{\xi_{22}}.
\]

\item[(ii)] It is always the case that $\{0\}\oplus \kernel T_{\xi_{22}} \subset \kernel T_\Xi$. However, if $\kernel T_{\xi_{22}}\neq\{0\}$, or, equivalently, $k_2<\deg(q_2)+k$, then $T_{\xi_{22}}$ is surjective and with an appropriate choice in a complement of $\kernel T_{\xi_{22}}$ one can always make the second row zero, so that
\[
\kernel T_\Xi= \kernel T_{\xi_{11}} \oplus \kernel T_{\xi_{22}}.
\]
If $k_2=\deg(q_2)+k$, then $\kernel T_{\xi_{22}}=\{0\}$ but $T_{\xi_{22}}$ is still surjective and the same argument still leads to the same formula for $\kernel T_\Xi$.
\end{itemize}
In both cases, $k_1\geq \deg(q_1)+k$ and $k_2 \leq \deg(q_{2})+k$, we obtain
\[
\dim \kernel T_\Xi=\max(\deg(q_1)+k-k_1,0) + \max(\deg(q_2)+k-k_2,0),
\]
in correspondence with the result for the case where $\Xi$ has a factorization in special form.

Next we present a general result that expresses the kernel of $T_\Xi$ in terms of the kernel of a certain compressed Toeplitz matrix. To express the result we make an identification between $\cP_{l-1}$ and $\BC^l$ via the following notation:
\[
\vec{g}=(g_0,\ldots,g_{l-1})^T\in\BC^{l}\mbox{ for }g(z)=g_0+g_1 z+ \cdots + g_{l-1}z^{l-1}\in\cP_{l-1}.
\]

\begin{proposition}\label{P:2x2kernel1}
Let $\Xi(z)$ be given by \eqref{2x2} such that \eqref{2x2con} is satisfied. For all $f=(f_1,f_2)^T\in H^p_{2}$ the following are equivalent:
\begin{itemize}
\item[(i)] $f\in \kernel T_\Xi$;

\item[(ii)] $f_1,f_2\in\cP$ such that
\begin{align*}
&\deg(f_1)<\deg(q_1)+k-k_1 \mbox{ and}\\ &\qquad\deg(d_{21}(z)f_1(z)+z^{k_2-k_{21}}f_2(z))<\deg(q_2)+k-k_{21};
\end{align*}

\item[(iii)] one of the following two cases occurs:
\begin{itemize}
  \item[(iii.a)] if $k_1\geq \deg(q_1)+k$ or $k_2\leq \deg(q_2)+k$, then $f_1,f_2\in\cP$ such that $\deg(f_1)<\deg(g_1)+k-k_1$ and $\deg(f_2)<\deg(g_2)+k-k_2$;

  \item[(iii.b)] if $k_1< \deg(q_1)+k$ and $k_2> \deg(q_2)+k$, let $M$ and $N$ be the compressions of the Toeplitz operator of $d_{21}$ obtained by restricting to the first $\deg(q_1)+k-k_1$ columns, in both cases, and selecting rows
\[
\max(\deg(q_2)+k-k_{21},0)+1\quad \mbox{to}\quad k_2-k_{21}\quad \mbox{for $M$},
\]
and rows
\[
k_2-k_{21}+1 \quad \mbox{to}\quad \max(k_2-k_{21},\deg(q_1)+k-k_1+\deg(d_{21})) \quad \mbox{for $N$},
\]
with $N$ to be interpreted as vacuous if the upper bound is less than the lower bound, then $f_1\in\cP$ such that $\deg(f_1)<\deg(q_1)+k-k_1$ and $M \vec{f}_1=0$ and $f_2$ is uniquely determined by $\vec{f}_2=-N\vec{f}_1$.
\end{itemize}
\end{itemize}
\end{proposition}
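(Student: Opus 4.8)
The plan is to prove the chain of equivalences in the order (i)$\Leftrightarrow$(ii) first, and then show (ii)$\Leftrightarrow$(iii). The equivalence (i)$\Leftrightarrow$(ii) is essentially an unwinding of Corollary \ref{C:Kern1} together with the scalar kernel description from \cite[Lemma 4.1]{GtHJR1}. First I would recall from Lemma \ref{L:polykern} that any $f\in\kernel T_\Xi$ has polynomial entries, so we may work entirely within $\cP^2$. Writing out the two rows of $T_\Xi f=0$ as in Corollary \ref{C:Kern1}, the first row reads $T_{z^{-k}z^{k_1}/q_1}f_1=0$, which by the scalar result is equivalent to $f_1\in\cP_{\deg(q_1)+k-k_1-1}$, i.e.\ $\deg(f_1)<\deg(q_1)+k-k_1$. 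For the second row, I would add the two Toeplitz terms and observe that $z^{-k}(z^{k_{21}}d_{21}f_1+z^{k_2}f_2)/q_2 = z^{-k}z^{k_{21}}(d_{21}f_1+z^{k_2-k_{21}}f_2)/q_2$; since $f_1,f_2$ are polynomials the combined numerator $g:=d_{21}f_1+z^{k_2-k_{21}}f_2$ is a polynomial, and the second row of $T_\Xi f=0$ becomes exactly $T_{z^{-k}z^{k_{21}}/q_2}\,g=0$, which by the scalar lemma means $\deg(g)<\deg(q_2)+k-k_{21}$. That gives (ii). The reverse direction is the same computation read backwards, using that $\cP\subset\Dom$ of each scalar symbol by \cite[Proposition 2.1]{GtHJR1} so everything is legitimate; this establishes (i)$\Leftrightarrow$(ii).

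For (ii)$\Leftrightarrow$(iii) I would split on the same two regimes appearing in (iii.a) and (iii.b). In case (iii.a), i.e.\ $k_1\geq\deg(q_1)+k$ or $k_2\leq\deg(q_2)+k$, I claim the second condition in (ii) decouples. If $k_1\geq\deg(q_1)+k$ then the first condition forces $f_1=0$ (empty degree bound), so $g=z^{k_2-k_{21}}f_2$ and the condition $\deg(g)<\deg(q_2)+k-k_{21}$ becomes $\deg(f_2)<\deg(q_2)+k-k_2$, giving (iii.a). If instead $k_2\leq\deg(q_2)+k$, then $\deg(q_2)+k-k_{21}\geq k_2-k_{21}\geq \deg(d_{21})+1 > \deg(d_{21}f_1\cdot z^{0})$ is not automatic, so here the argument is that given any $f_1$ with $\deg(f_1)<\deg(q_1)+k-k_1$, one can choose the high-order coefficients of $f_2$ (the ones in degrees $\geq \deg(q_2)+k-k_2$, of which there may be some since $k_2\leq\deg(q_2)+k$) to cancel the top part of $d_{21}f_1$; more precisely one solves for $f_2$ so that $\deg(g)<\deg(q_2)+k-k_{21}$, and the freedom left in $f_2$ is exactly $\deg(f_2)<\deg(q_2)+k-k_2$ — here one uses that the leading coefficient of $z^{k_2-k_{21}}f_2$ sits in degree $k_2-k_{21}+\deg(f_2)$ and that $k_2-k_{21}>\deg(d_{21})$ by \eqref{2x2con}, so the contributions of $f_2$ dominate and can be used to kill whatever $d_{21}f_1$ produces in degrees $\geq \deg(q_2)+k-k_{21}$. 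Conversely if $\deg(f_1),\deg(f_2)$ satisfy the (iii.a) bounds then $\deg(d_{21}f_1)<\deg(d_{21})+\deg(q_1)+k-k_1$ and $\deg(z^{k_2-k_{21}}f_2)<\deg(q_2)+k-k_{21}$, and I would check the first of these is also $<\deg(q_2)+k-k_{21}$ using $\deg(d_{21})+k_1\leq k_2$ (from \eqref{2x2con}, since $k_{21}+\deg(d_{21})<k_2$ and $k_{21}\geq 0$... actually one needs $\deg(d_{21})<k_2-k_1$, which holds when $k_1\geq \deg(q_1)+k$ forces $f_1=0$ anyway, and in the other sub-case $q_1\mid q_2$ gives $\deg(q_1)\leq\deg(q_2)$ so the bound follows) — this bookkeeping is the fussy part but elementary.

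In case (iii.b), i.e.\ $k_1<\deg(q_1)+k$ and $k_2>\deg(q_2)+k$, the point is that there is genuine coupling: $f_2$ is forced to cancel the dangerous middle coefficients of $d_{21}f_1$. Here I would write $f_1\in\cP_{\deg(q_1)+k-k_1-1}$ as a free vector $\vec f_1\in\BC^{\deg(q_1)+k-k_1}$, and expand $d_{21}(z)f_1(z)=\sum_i (\mathrm{Toep}(d_{21})\vec f_1)_i z^i$ where $\mathrm{Toep}(d_{21})$ is the (lower-triangular, banded) Toeplitz matrix of multiplication by $d_{21}$. The polynomial $g=d_{21}f_1+z^{k_2-k_{21}}f_2$ must have degree $<\deg(q_2)+k-k_{21}\leq k_2-k_{21}$ (the inequality because $k_2>\deg(q_2)+k$). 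The coefficients of $g$ in degrees $0,\dots,\deg(q_2)+k-k_{21}-1$ only see $d_{21}f_1$ (since $z^{k_2-k_{21}}f_2$ starts at degree $k_2-k_{21}$), so there is no constraint there. The coefficients of $g$ in degrees $\deg(q_2)+k-k_{21},\dots,k_2-k_{21}-1$ still only see $d_{21}f_1$, and must vanish: these rows of $\mathrm{Toep}(d_{21})\vec f_1=0$ are exactly the matrix $M$ described in the statement (rows $\max(\deg(q_2)+k-k_{21},0)+1$ through $k_2-k_{21}$, restricted to the first $\deg(q_1)+k-k_1$ columns). Finally, the coefficients of $g$ in degrees $k_2-k_{21},\dots$ see both $d_{21}f_1$ and $z^{k_2-k_{21}}f_2$, and must also vanish (degree of $g$ is below $k_2-k_{21}$), which determines $f_2$ uniquely via $\vec f_2=-N\vec f_1$ where $N$ picks out rows $k_2-k_{21}+1$ through the top degree $\deg(q_1)+k-k_1+\deg(d_{21})$ of $d_{21}f_1$ (vacuous if that top degree is below $k_2-k_{21}$). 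I would then note that the resulting $f_2$ automatically lies in $\cP$ and has the correct degree bound, and conversely that any $(f_1,f_2)$ of this prescribed form satisfies (ii). I expect the main obstacle to be the careful indexing in case (iii.b) — matching the off-by-one conventions in the definitions of $M$ and $N$ (which rows, $0$-indexed vs.\ $1$-indexed, and the treatment of the degenerate sub-cases $\deg(q_2)+k-k_{21}<0$ or $N$ vacuous) to the coefficient-vanishing conditions — rather than any conceptual difficulty; the underlying mechanism is just "solve a lower-triangular banded Toeplitz system, read off which block is homogeneous and which block defines $f_2$."
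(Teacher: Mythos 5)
Your overall architecture coincides with the paper's: (i)$\Leftrightarrow$(ii) reduces each row to a degree condition on the combined numerator, and (ii)$\Leftrightarrow$(iii.b) reads off which coefficients of $d_{21}f_1+z^{k_2-k_{21}}f_2$ are unconstrained ($L$), which must vanish and involve only $f_1$ ($M$), and which determine $f_2$ ($N$) --- exactly the paper's decomposition into $L\vec f_1$, $M\vec f_1$, $N\vec f_1+(\vec f_2)_1$ and $(\vec f_2)_2$. One small caveat in (i)$\Leftrightarrow$(ii): you invoke the scalar kernel lemma for the symbol $z^{k_{21}-k}/q_2$, which has a pole at $0$ when $k_{21}<k$ and hence is not in $\Rat(\BT)$, so \cite[Lemma 4.1]{GtHJR1} does not literally apply; the paper instead argues directly that, $z^k$ and $q_2$ being coprime, the expression $q_2v_2+z^kr_2$ sweeps out all of $\cP_{\deg(q_2)+k-1}$. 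Your conclusion is correct, but you should either prove that extension of the scalar lemma or use the coprimality argument.

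The genuine gap is in (ii)$\Leftrightarrow$(iii.a), sub-case $k_2\leq\deg(q_2)+k$ with $k_1<\deg(q_1)+k$. Your converse asserts that $\deg(d_{21}f_1)<\deg(q_2)+k-k_{21}$ holds for every admissible $f_1$ because $q_1\mid q_2$; it does not. What is needed is $\deg(d_{21})+k_{21}<\deg(q_2)-\deg(q_1)+k_1+1$, whereas \eqref{2x2con} only gives $\deg(d_{21})+k_{21}\leq k_2-1\leq\deg(q_2)+k-1$. Concretely, take $k=k_1=k_{21}=0$, $q_1=q_2$ of degree $5$, $k_2=5$, $\deg(d_{21})=4$: all hypotheses hold and you are in regime (iii.a), where the bound forces $f_2=0$; yet $(f_1,0)$ with $\deg(f_1)=4$ violates (ii) since $\deg(d_{21}f_1)=8\geq 5$, and the actual kernel element with first component $f_1$ has a nonzero $f_2$ of degree $3$. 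Note that your own forward argument already signals this: you correctly observe that the coefficients of $f_2$ in degrees $\geq\deg(q_2)+k-k_2$ are forced to cancel the top of $d_{21}f_1$, so the kernel is a graph $\bigl\{(f_1,\tilde f_2(f_1)+g)\bigr\}$ over $\cP_{\deg(q_1)+k-k_1-1}\oplus\cP_{\deg(q_2)+k-k_2-1}$, not that Cartesian product itself. This is also all the paper establishes --- its proof of (iii.a) defers to the discussion preceding the proposition, where $\kernel T_\Xi=\kernel T_{\xi_{11}}\oplus\kernel T_{\xi_{22}}$ is obtained from surjectivity of $T_{\xi_{22}}$ ``with an appropriate choice in a complement,'' i.e.\ as an isomorphism/dimension statement. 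So in this regime you should prove (and state) (iii.a) as the parametrization you constructed; the literal set equality you attempted to verify in the converse is false in general.
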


\begin{remark}\label{R:2x2dichotomy}
Note that there is a certain dichotomy between the two subcases (iii.a) and (iii.b). In the first there is some freedom in $f_2$ (provided $k_2< \deg(q_2)$) but the second condition in (ii) does not put any further constraint on $f_1$, while in the second subcase $f_1$ is further constrained by the condition $M \vec{f}_1=0$, but $f_2$ is uniquely determined by the choice of $f_1$.
\end{remark}

\begin{proof}[\bf Proof of Proposition \ref{P:2x2kernel1}]
First we prove the equivalence of (i) and (ii). Note that, by Lemma \ref{L:polykern}, $\kernel T_{\Xi}$ is contained in $\cP^2$, so that in both (i) and (ii) we have that $f_1,f_2\in\cP$. In particular, $f\in \Dom(T_\Xi)$ in both cases, so we may assume this to be the case. Then, by Lemma \ref{L:MidtermDom}, it follows that
\begin{align*}
&\qquad\qquad\Xi(z) f(z)= h(z)+z^{-k}v(z)+\rho(z) \mbox{ with}\\
& h=\pmat{h_1\\h_2} \in H^p_2,\, v=\pmat{v_1\\v_2} \in \cP^2_{k-1},\, \rho=\pmat{\rho_1\\\rho_2}=\pmat{r_1/q_1\\ r_2/q_2}\in\Rat_0(\BT),
\end{align*}
and it follows that $f\in \kernel T_\Xi$ precisely when $h=0$. Writing out the identity in the first row it follows that we get $h_1=0$ if and only if $f_1\in \kernel T_{\xi_{11}}$, that is, $\deg(f_1)<\deg(q_1)+k-k_1$. Considering the second row, it follows that $h_2=0$ if and only if
\[
\frac{z^{k_{12}}d_{21}(z)}{z^k q_2(z)}f_1(z) + \frac{z^{k_2}}{z^k q_2(z)}f_2(z)=\frac{v_2(z)}{z^k} + \frac{r_2(z)}{q_2(z)}
\]
for some $r_2,v_2\in\cP$ with $\deg(r_2)<\deg(q_2)$ and $\deg(v_2)<k$. Multiplying by $z^k q_2(z)$ on both sides and factoring out $z^{k_{21}}$ on the left side, we see that $h_1=0$ is equivalent to
\begin{equation}\label{degcon}
z^{k_{21}}(d_{21}(z)f_1(z) + z^{k_2-k_{21}}f_2(z))=q_2(z)v_2(z)+ z^k r_2(z),
\end{equation}
for some $r_2,v_2\in\cP$ with $\deg(r_2)<\deg(q_2)$ and $\deg(v_2)<k$. Note that $q_2$ and $z^k$ are co-prime. Therefore, ranging over $r_2\in\cP_{\deg(q_2)-1}$ and $v_2\in\cP_{k-1}$, in the right-hand side of \eqref{degcon} we can get any vector in $\cP_{\deg(q_2)+k-1}$. As a result, we see that $h_2=0$ if and only if
\[
z^{k_{21}}(d_{21}(z)f_1(z) + z^{k_2-k_{21}}f_2(z))\in\cP_{\deg(q_2)+k-1},
\]
or, what is the same,
\[
\deg(d_{21}(z)f_1(z) + z^{k_2-k_{21}}f_2(z))<\deg(q_2)+k-k_{21}.
\]
This proves that (i) and (ii) are equivalent.

Next we prove the equivalent of (ii) and (iii). The cases $k_1\geq \deg(q_1)+k$ or $k_2\leq \deg(q_2)+k$ in (iii.a) correspond precisely to the two cases discussed after \eqref{2x2con}. Hence we may restrict to the special case of item (iii.b) and assume that $k_1<\deg(q_1)+k$ and $k_2>\deg(q_2)+k$, which we will do in the remainder of the proof.

In both (ii) and (iii.b), $f_1$ is constrained by $\deg(f_1)<\deg(q_1)+k-k_1$, and hence we may assume that to be the case. Let $T$ denote the restriction of the Toeplitz operator of $d_{21}$ to the first $\deg(q_1)+k-k_1$ columns and let $L$ be the matrix obtained by selecting the first $\deg(q_2)+k-k_{21}$ rows of $T$, corresponding to a vacuous matrix (no rows) in case $\deg(q_2)+k\leq k_{21}$. Hence $L$ and $N$ can both be vacuous, but $M$ cannot since $k_2$ is larger than both $k_{21}$ and $\deg(q_2)+k$. Note that after the $\deg(q_1)+k-k_1+\deg(d_{21})$-th row, $T$ only has zero-rows. Set $g=d_{21} f_1$. Then $\deg(g)< \deg(d_{21})+\deg(q_1)+k-k_1$. For $f_2\in\cP$, write $(\vec{f}_2)_1$ for the first
\begin{align*}
&\max(k_2-k_{21},\deg(q_1)+k-k_1+\deg(d_{21}))- (k_2-k_{21})=\\
&\qquad\qquad= \max(0,\deg(q_1)+k-k_1+\deg(d_{21})+k_{21}-k_2)
\end{align*}
entries of $\vec{f}_2$, which might be a vacuous vector, and
$(\vec{f}_2)_2$ for the remaining entries. Then
\[
\overrightarrow{d_{21}(z)f_1(z)+z^{k_2-k_{21}}f_2(z)}=\vec{g}+\overrightarrow{z^{k_2-k_{21}}f_2(z)}
=\pmat{L\vec{f}_1\\M\vec{f_1}\\N \vec{f}_1+(\vec{f}_2)_1\\(\vec{f}_2)_2}.
\]
Now it follows that $\deg(d_{21}(z)f_1(z)+z^{k_2-k_{21}}f_2(z))<\deg(q_2)-k_{21}$ corresponds to all but the first component being zero, that is, $(\vec{f}_2)_2=0$, $N \vec{f}_1+(\vec{f}_2)_1=0$ and $M\vec{f}_1=0$. In other words, the second condition of item (ii) holds if and only if $M\vec{f}_1=0$ and $\vec{f}_2=(\vec{f}_2)_1=-N \vec{f}_1$, which proves that (ii) and (iii) are equivalent.
\end{proof}

It follows from statement (iii) that the dimension of $T_\Xi$ in subcase (b) is equal to the dimension of the kernel of the matrix $M$. We will next show how this dimension can be computed, at least in certain cases.

\begin{proposition}\label{P:2x2kernel2}
Let $\Xi$ be given by \eqref{2x2} such that \eqref{2x2con} is satisfied. Assume that $k_2> \deg(q_2)+k$ and $k_1< \deg(q_1)+k$. Then regarding the dimension of $\kernel T_\Xi$ we have the following:
\begin{itemize}
\item[(i)] If $k_2-k_{21}\leq \deg(q_1)+k-k_1$, then
\[
\dim \kernel T_\Xi= \deg(q_1)+k-k_1 + \max(\deg(q_2)+k-k_2,k_{21}- k_2).
\]
In particular,
\begin{itemize}
\item[(i.a)] if in addition $\deg(q_2)+k-k_{21}\geq 0$, then
\[
\dim \kernel T_\Xi=\deg(q_1)-k_1 + \deg(q_2) - k_2 + 2k;
\]

\item[(i.b)] if in addition $\deg(q_2)+k-k_{21}< 0$, then
\[
\dim \kernel T_\Xi=\deg(q_1)+k-k_1 + k_{21} - k_2.
\]

\end{itemize}

\item[(ii)] If $k_2-k_{21}> \deg(q_1)+k-k_1$  and in addition $\deg(q_2)+k\leq k_{21}$, then $\dim \kernel T_\Xi=0$.

\item[(iii)] If $k_2-k_{21}> \deg(q_1)+k-k_1$ and $\deg(q_2+k)> k_{21}$ and in addition $\deg(d_{21})\leq  \deg(q_2)+k- k_{21}$ or
$\deg(d_{21})\leq k_1+  k_2- \deg(q_1)-k -k_{21}$, then $\dim \kernel T_\Xi = \max(a_1,a_2,0)$, where
\begin{align*}
 &a_1= \deg(q_1)-k_1+\deg(q_2)-k_2 +2k \mbox{ and}\\
 &\qquad   a_2=\min(\deg(q_2)+k-k_{21}-\deg(d_{21}), \deg(q_1)+k-k_1).
\end{align*}
In particular:
\begin{itemize}
\item[(iii.a)] if in addition
\[
\deg(q_2)+k-k_{21} \geq \deg(d_{21}) \geq k_1+k_2-\deg(q_1)-k-k_{21},
\]
then $\dim \kernel T_\Xi= \deg(q_1)-k_1 + \deg(q_2)-k_2+2k$;

\item[(iii.b)] if in addition
\[
\deg(q_2)+k-k_{21} \geq \deg(d_{21}) \leq k_1+k_2-\deg(q_1)-k-k_{21},
\]
then $\dim \kernel T_\Xi=\min(\deg(q_1)+k-k_1, \deg(q_2)+k-k_{21}- \deg(d_{21}));$

\item[(iii.c)] if in addition
\[
\deg(q_2)+k-k_{21} \leq \deg(d_{21}) \leq k_1+k_2-\deg(q_1)-k-k_{21},
\]
then $\dim \kernel T_\Xi=0$.

\end{itemize}

\item[(iv)] If $k_2-k_{21}> \deg(q_1)+k-k_1$ and $\deg(q_2)+k> k_{21}$ and in addition $\deg(d_{21})>  \deg(q_2)+k- k_{21}$ and
$\deg(d_{21})> k_1+  k_2- \deg(q_1)-k -k_{21}$, then
    \[
    \dim \kernel T_\Xi \leq \min(\deg(q_1)+k-k_1+\deg(d_{21})+k_{21}-k_2,k_{21}-\deg(q_2)-k+1).
    \]
\end{itemize}
\end{proposition}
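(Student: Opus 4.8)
The plan is to reduce the inequality to an estimate for $\dim\kernel M$, where $M$ is the matrix occurring in subcase~(iii.b) of Proposition~\ref{P:2x2kernel1}, and then to bound $\dim\kernel M$ by two short injectivity arguments. Write $a:=\deg(q_1)+k-k_1$, $\alpha:=\deg(q_2)+k-k_{21}$, $\beta:=k_2-k_{21}-1$ and $\nu:=\deg(q_1)+k-k_1+\deg(d_{21})+k_{21}-k_2$. The standing hypotheses $k_1<\deg(q_1)+k$ and $k_2>\deg(q_2)+k$, together with the inequalities of case~(iv) and the normalization $d_{21}(0)\neq0$, yield $a\ge1$, $\alpha\ge1$, $a\le\beta$, $\deg(d_{21})>\alpha$, $\nu\ge1$ and the bookkeeping identity $\nu=a+\deg(d_{21})-1-\beta$. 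Since $k_1<\deg(q_1)+k$ and $k_2>\deg(q_2)+k$, Proposition~\ref{P:2x2kernel1} places us in subcase~(iii.b); and since $\deg(q_2)+k>k_{21}$ and $k_2-k_{21}>a$, tracing through that description (and recalling $\dim\kernel T_\Om=\dim\kernel T_\Xi$) shows $\dim\kernel T_\Om=\dim\kernel M$, where $\kernel M$ is the set of $f_1\in\cP_{a-1}$ such that $d_{21}f_1$ has vanishing coefficients in degrees $\alpha,\alpha+1,\dots,\beta$; equivalently, $f_1\in\kernel M$ exactly when $d_{21}(z)f_1(z)=u(z)+z^{\beta+1}w(z)$ for polynomials $u,w$ — uniquely determined by $f_1$ — with $\deg u<\alpha$ and $\deg w<\nu$.

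For the first bound I would consider the linear map $\kernel M\to\cP_{\nu-1}$, $f_1\mapsto w$. If $f_1,f_1'\in\kernel M$ have the same image then $d_{21}(f_1-f_1')=u-u'$ has degree $<\alpha$, whereas $\deg\bigl(d_{21}(f_1-f_1')\bigr)\ge\deg(d_{21})>\alpha$ unless $f_1=f_1'$; hence this map is injective and $\dim\kernel M\le\dim\cP_{\nu-1}=\nu$. For the second bound I would consider the linear map $\kernel M\to\cP_{\alpha-1}$, $f_1\mapsto u$. If $f_1,f_1'\in\kernel M$ have the same image then $d_{21}(f_1-f_1')=z^{\beta+1}(w-w')$, so $z^{\beta+1}$ divides $d_{21}(f_1-f_1')$; since $d_{21}(0)\ne0$, the polynomial $d_{21}$ is coprime to $z^{\beta+1}$, and therefore $z^{\beta+1}$ divides $f_1-f_1'$, which combined with $\deg(f_1-f_1')\le a-1\le\beta-1<\beta+1$ forces $f_1=f_1'$; hence this map too is injective and $\dim\kernel M\le\dim\cP_{\alpha-1}=\alpha$. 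Combining, $\dim\kernel T_\Om=\dim\kernel M\le\min(\nu,\alpha)$, which is the asserted estimate, with $\nu=\deg(q_1)+k-k_1+\deg(d_{21})+k_{21}-k_2$ the first term of the minimum and $\alpha=\deg(q_2)+k-k_{21}$ the second.

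Both injectivity arguments are immediate once the set-up is in place, so the step needing care is the reduction. There one must extract from the proof of Proposition~\ref{P:2x2kernel1} the precise column range (the first $a$ coordinates) and row range (degrees $\alpha$ through $\beta$) defining $M$ under the hypotheses of case~(iv), and check that every inequality of case~(iv) is used where it is needed: $\deg(q_2)+k>k_{21}$ to make $\alpha\ge1$ and thereby to fix that row range, $k_2-k_{21}>\deg(q_1)+k-k_1$ to give $a\le\beta$ (used in the second injectivity), $\deg(d_{21})>\deg(q_2)+k-k_{21}$ (used in the first injectivity), and $\deg(d_{21})>k_1+k_2-\deg(q_1)-k-k_{21}$ to make $\nu\ge1$, so that $\cP_{\nu-1}$ has dimension $\nu$ and $\deg w<\nu$ is the correct degree constraint in the equivalent description of $\kernel M$; the remaining hypotheses $k_2>\deg(q_2)+k$ and $k_1<\deg(q_1)+k$ are precisely what place us in subcase~(iii.b) to begin with.
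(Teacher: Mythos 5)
Your proposal addresses only case (iv) of the proposition. Cases (i), (ii) and (iii) --- which give \emph{exact} formulas for $\dim\kernel T_\Xi$ rather than upper bounds, and which make up the bulk of the statement --- are not touched at all. They require work of a different kind: after the same reduction to $\dim\kernel M$ via Proposition~\ref{P:2x2kernel1}(iii.b), one must compute $\kernel M$ exactly from the structure of $M$ as a compressed lower-triangular Toeplitz matrix of $d_{21}$. In case (i) the stacked matrix $\sbpm{L\\M}$ is ``fat'' or square, and one isolates an invertible square lower-triangular Toeplitz block (invertible because $d_{21}(0)=\de_0\neq 0$) to solve for part of $\vec{f}_1$, yielding $\dim\kernel M=j_1+\deg(q_1)+k+k_{21}-k_1-k_2$; in case (ii) the columns of $M$ are linearly independent because $\de_0\neq0$ sits on the diagonal, so the kernel is trivial; in case (iii) the position of $\deg(d_{21})$ relative to $j_1=\deg(q_2)+k-k_{21}$ and $j_2=k_1+k_2-\deg(q_1)-k-k_{21}$ decides whether $M$ has full row rank, has a computable number of zero columns, or has independent columns, and one must then verify that the three subcases merge into the single formula $\max(a_1,a_2,0)$. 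None of this follows from your two injectivity maps, which by their nature only produce upper bounds. So as written the proposal establishes roughly a quarter of the proposition.

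For case (iv) itself your argument is correct and is in fact a cleaner route than the paper's, which bounds $\operatorname{rank} M$ from below by the sizes of the triangular blocks in the lower-left and upper-right corners of the explicit matrix \eqref{M form j2>0}. Your injections $f_1\mapsto w$ and $f_1\mapsto u$ give $\dim\kernel M\le\min(\nu,\alpha)$ with $\alpha=\deg(q_2)+k-k_{21}=j_1$, which is sharper by one in the second term than the paper's $\min(\deg(d_{21})-j_2,\,j_1+1)$ and therefore implies it. Do note, however, that you claim to have obtained ``the asserted estimate'': the proposition as printed has second term $k_{21}-\deg(q_2)-k+1$ (and the last display of the paper's proof has $k_{21}-\deg(q_2)+k+1$), both of which appear to be sign slips for $j_1+1=\deg(q_2)+k-k_{21}+1$; your $\alpha$ coincides with none of these literally, so you should state explicitly that your (stronger) bound implies the intended one rather than asserting that they agree.
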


\begin{proof}[\bf Proof]
As observed in Proposition \ref{P:2x2kernel1} (iii.b), under the assumptions $k_2> \deg(q_2)+k$ and $k_1< \deg(q_1)+k$, the kernel of $T_\Xi$ has the form
\[
\kernel T_{\Xi} = \left\{\ \sbpm{f_1\\f_2} \colon M \vec{f}_1=0,\, \vec{f}_2=-N\vec{f}_1 \right\},
\]
and thus $\dim \kernel T_\Xi=\dim \kernel M$. Note that $M$ is of size
\[
k_2-k_{21}-\max(deg(q_2)+k-k_{21},0)=k_2-\max(\deg(q_2)+k,k_{21})
\]
times $\deg(q_1)+k-k_1$. Let $L$ be the, possibly vacuous, matrix defined in the proof of Proposition \ref{P:2x2kernel1}. Then $\sbpm{L\\M}$ is a possibly non-square, lower triangular Toeplitz matrix, defined by $d_{21}$, of size $k_2 - k_{21}$ by $\deg(q_1)+k-k_1$. The distinction between cases (i) and (ii)--(iv) is whether $\sbpm{L\\M}$ is ``fat'' or square in case (i) or ``tall'' in cases (ii)--(iv). That is, if $d_{21}(z)=\sum_{j=0}^{\deg(d_{21})-1} \de_j z^j$ and we set $\de_j=0$ for $j\geq \deg(d_{21})$, then in case (i) $\sbpm{L\\M}$ has the form
\[
\pmat{L\\M}=\pmat{\de_0&0&\cdots&0&0&\cdots&0\\\vdots &\ddots&\ddots&\vdots&\vdots&&\vdots\\ \vdots&&\ddots&0&\vdots&&\vdots\\ \de_{k_{2}-k_{21}-1}&\cdots&\cdots& \de_0&0&\cdots&0},
\]
where possibly there are no zero-columns at the right hand-side (in case $k_2 - k_{21}=\deg(q_1)+k-k_1$),
while in cases (ii)--(iv) $\sbpm{L\\M}$ has the form
\begin{equation}\label{LMmat-caseii}
\pmat{L\\M}=\pmat{\de_0&0 &\cdots &0\\ \de_1&\ddots&\ddots&\vdots\\ \vdots&\ddots&\ddots&0\\\de_{\deg(q_1)-k_1-1} &\cdots&\de_1&\de_0\\
\de_{\deg(q_1)-k_1-1} &\cdots&\cdots&\de_1\\ \vdots&&&\vdots\\
\de_{k_2-k_{21}-1}&\cdots&\cdots&\de_{k_1+k_2-\deg(q_1)-k_{21}}}.
\end{equation}

Let us first consider case (i). Set $j_1=\max(\deg(q_2)+k-k_{21},0)$ Then $M$ has the form
\[
M=\pmat{
\de_{j_1}&\cdots&\de_0&0&\cdots&0&0&\cdots&0\\
\vdots&\ddots&&\ddots&\ddots&\vdots&\vdots&&\vdots\\
\vdots&&\ddots&&\ddots&0&\vdots&&\vdots\\
\de_{k_2-k_{21}-1}&\cdots&\cdots&\cdots&\cdots&\de_0&0&\cdots&0\\},
\]
where possibly there are no zero-columns on the right (if $k_2 - k_{21}=\deg(q_1)+k-k_1$) and possibly there are no columns on the left without prescribed zeroes (if $j_1=0$). Write $M$ as
\[
M=\pmat{M_1&M_2&0},
\]
where $M_1$ consists of the first $j_1$ columns of $M$ and $M_2$ consists of the next $k_2-\max(\deg(q_2)+k,k_{21})$ columns of $M$. Then $M_2$ is the square lower triangular Toeplitz operator defined by $d_{21}$ of size $k_2-\max(\deg(q_2)+k,k_{21})$, so that $M_2$ is invertible, since $\de_0\neq 0$. It then follows that
\[
\kernel M=\left\{ \sbpm{v_1\\ -M_2^{-1}M_1 v_1\\ v_3} \colon \mbox{with $v_1\in\BC^{j_1}$ and $v_3\in\BC^{\deg(q_1)+k+k_{21}-k_1-k_2}$}  \right\}.
\]
Therefore, we have
\begin{align*}
\dim \kernel T_\Xi &=\dim \kernel M= j_1+ \deg(q_1)+k+k_{21}-k_1-k_2\\
&= \deg(q_1)+k- k_1 + \max(\deg(q_2)+k-k_{21},0) +k_{21} - k_2,
\end{align*}
as claimed. The formulas for the two special cases (i.a) and (i.b) are obtained by simply filling in the corresponding value for $j_1$.

Now we turn to case (ii). Apart from the fact that $\sbpm{L\\M}$ now has the form \eqref{LMmat-caseii}, we are assuming that $j_1=0$, where $j_1$ is defined as in the first part of the proof. This is precisely the case where $L$ is vacuous (zero rows) so that $M=\sbpm{L\\M}$ is as in \eqref{LMmat-caseii}. Since $\de_0\neq0$ it follows that the columns of $M$ form a linearly independent set of vectors, so that $\kernel M=\{0\}$.

Next we prove the claim of case (iii). In addition to the number $j_1$ defined in the first part of the proof, set $j_2:=k_1+k_2-\deg(q_1)-k-k_{21}$. Note that the assumption in case (i) can be rewritten as $j_2\leq 0$, while case (ii) corresponds to $j_2>0$ and $j_1=0$. Moreover, the restrictions in case (iii) can be reformulated as:
\begin{equation}\label{Case (iii) reform}
j_2>0,\, j_1>0\quad \mbox{and in addition}\quad \deg(d_{21})\leq j_2 \mbox{ or } \deg(d_{21})\leq j_1
\end{equation}
and the subcases as
\[
\mbox{(iii.a):\ $j_1\geq \deg(d_{21}) \geq j_2$,\
(iii.b):\ $j_1\geq \deg(d_{21}) \leq j_2$,\  (iii.c):\ $j_1\leq \deg(d_{21}) \leq j_2$.}
\]
Since we assume $j_1>0$, it follows that $j_1=\deg(q_2)+k-k_{21}$.
We shall first prove the formulas in the special cases and then show that they merge into the general formula for the dimension of the kernel of $T_\Xi$.

Note further that $j_2>0$ implies that the matrix $M$ has the form
\begin{align}\label{M form j2>0}
&\qquad\qquad
M =\pmat{
\de_{j_1}&\cdots& \cdots&\de_{j_3}\\
\vdots&\ddots&\ddots&\vdots\\
\de_{k_2-k_{21}-1}&\cdots& \cdots&\de_{j_2}}\\
 & \mbox{ with $j_3:=j_1+k_1+1-\deg(q_1)-k$, and where $\de_l=0$ for $l<0$,}\notag
\end{align}
where it is not necessarily the case that $j_1>j_2$ as one might interpret from the positions of $\de_{j_1}$ and $\de_{j_2}$. Thus, if $j_3<0$ then there are zeroes in the right upper corner of $M$, but whether there are zeroes in the right upper corner of $M$, or not, is not relevant for the analysis of case (iii). Note that the first condition in \eqref{2x2con} tells us that $\deg(d_{21})\leq k_2-k_{21}-1$. The conditions of case (iii) formulated as in \eqref{Case (iii) reform} tell us that $\deg(d_{21})$ either has to be on the right of $j_1$ or on the right of $j_2$, where this includes the possibility that $\deg(d_{21})$ coincides with $j_1$ or $j_2$. We shall next determine the dimensions of $\kernel M$ in the subcases.\smallskip

\paragraph{\bf Case (iii.a)} In this case $M$ has the form
\[
M=\pmat{0&\cdots&0&\de_{\deg(d_{21})}&\cdots&\cdots&\cdots&\cdots&\de_{j_3}\\ \vdots&&\vdots&0&\ddots&&&&\vdots\\
\vdots&&\vdots&\vdots&\ddots&\ddots&&&\vdots\\ 0&\cdots&0&0&\cdots&0&\de_{\deg(d_{21})}&\cdots&\de_{j_2}}.
\]
Again, and similarly in the other cases to come, we point out that it may happen that $\deg(d_{21})$ coincides with $j_1$, $j_2$ or $k_2-k_{21}-1$, although this is not suggested by the forms of $M$. Note that in this case $M$ has a similar form as in case (i) and a similar argument can be used to determine $\kernel M$. In fact, since we only look for a formula for $\dim \kernel M$ it suffices to note that $M$ has full row rank with the rank given by $\textup{rank}\, M=k_{2}-k_{21}-j_1=k_2-\deg(q_2)-k$ and therefore
\begin{align*}
\dim \kernel M&=\deg(q_1)+k-k_1 -(k_2-\deg(q_2)-k)\\
&=\deg(q_1)-k_1 +\deg(q_2) -k_2 +2k.
\end{align*}
Note that the right hand-side cannot be negative due to our assumptions. Indeed, since $j_2\leq \deg(d_{21})\leq j_1=\deg(q_2)+k-k_{21}$, it follows that
\begin{align*}
\deg(q_1)+k-k_1 & \geq k_1-k_{21}-\deg(d_{21}) \geq k_1 -\deg(q_2)-k.
\end{align*}

\paragraph{\bf Case (iii.b)} Now $M$ looks like
\[
M=\pmat{0&\cdots&0&\de_{\deg(d_{21})}&\cdots&\cdots&\de_{j_3}\\ \vdots&&\vdots&0&\ddots&&\vdots\\
\vdots&&\vdots&\vdots&\ddots&\ddots&\vdots\\ 0&\cdots&0&0&\cdots&0&\de_{\deg(d_{21})}\\
0&\cdots&0&0&\cdots&\cdots&0\\
\vdots&&\vdots&\vdots&&&\vdots\\
0&\cdots&0&0&\cdots&\cdots&0}.
\]
In this case, the dimension of the kernel corresponds to the number of zero-columns, which is the smallest of $j_1-\deg(d_{21})$ and $\deg(q_1)+k-k_1$. Hence
\begin{align*}
\dim \kernel M = \min(\deg(q_1)+k-k_1, \deg(q_2)+k-k_{21}- \deg(d_{21})).
\end{align*}
By assumption of $j_1\geq \deg(d_{21})$, this cannot be a negative number.\smallskip

\paragraph{\bf Case (iii.c)} In the final case $M$ comes out as
\[
M=\pmat{
\de_{j_1} &\cdots&\de_{j_3}\\
\vdots&&\vdots\\
\de_{\deg(d_{21})}&&\vdots\\
0&\ddots&\de_{\deg(d_{21})}\\
\vdots&\ddots&0\\
\vdots&&\vdots\\
0&\cdots&0}
\]
It is clear in this case that the columns of $M$ are linearly independent so that $\dim \kernel M=0$, as claimed.

To complete the proof of statement (iii) it remains to show that the formulas for the three special cases are all subsumed by the general formula. Since $j_1=\deg(q_2)+k-k_{21}$, we have
\[
j_1-j_2=\deg(q_1)+k-k_1 + \deg(q_2)+k-k_2=a_1,
\]
and
\[
a_2=\min(j_1-\deg(d_{21}),\deg(q_1)+k-k_1).
\]
Since $j_1-j_2<\deg(q_1)+k-k_1>0$, we have
\begin{align*}
\dim \kernel T_\Xi &= \max(a_1, a_2, 0)\\
&=\max(j_1-j_2,\min(j_1-\deg(d_{21}),\deg(q_1)+k-k_1),0)\\
&=\min(\max(j_1-j_2,j_1-\deg(d_{21}),0),\deg(q_1)+k-k_1)
\end{align*}
and we need to show that this formula corresponds to the formulas in cases (iii.a)--(iii.c) when restricting to the assumptions made in these cases.\smallskip

\paragraph{\bf Case (iii.a)} If $j_1\geq \deg(d_{21}) \geq j_2$, then $j_1-j_2\geq j_1- \deg(d_{21})\geq 0$ and thus $\dim \kernel T_\Xi=j_1-j_2 = \deg(q_1)+k-k_1 + \deg(q_2)+k-k_2$, using that $j_1-j_2<\deg(q_1)+k-k_1$.\smallskip

\paragraph{\bf Case (iii.b)} If $j_1\geq \deg(d_{21}) \leq j_2$, then we have
\begin{align*}
& \max(j_1-j_2,j_1-\deg(d_{21}),0) = j_1-j_2 + \max(0,j_2-\deg(d_{21}),j_2-j_1)\\
&\qquad = j_1-j_2 + \max(j_2-\deg(d_{21}),j_2-j_1)
=\max(j_1-\deg(d_{21}),0)\\
&\qquad = j_1-\deg(d_{21}) = \deg(q_2)+k - k_{21} - \deg(d_{21}),
\end{align*}
and we obtain
\[
\dim \kernel T_\Xi=\min(\deg(q_2)+k - k_{21} - \deg(d_{21}),\deg(q_1)+k-k_1).
\]

\paragraph{\bf Case (iii.c)} If $j_1\leq \deg(d_{21}) \leq j_2$, then $j_1-j_2$ and $j_1-\deg(d_{21})$ are both non-positive, so that $\dim \kernel T_\Xi=0$.\smallskip

Finally, we turn to the proof of case (iv). Then the assumption translates to $j_1<\deg(d_{21})$ and $j_2<\deg(d_{21})$. In that case $M$ has the form
\[
M=
\pmat{
\de_{j_1}&\cdots& \cdots& \cdots& \cdots&\de_{j_3}\\
\vdots&&&&&\vdots\\
\de_{\deg(d_{21})}&&&&&\vdots\\
0&\ddots&&&&\vdots\\
\vdots&\ddots&\ddots&&&\vdots\\
0&\cdots&0&\de_{\deg(d_{21})}& \cdots&\de_{j_2}}.
\]
It follows that the rank of $M$ is at least equal to the size of the lower triangular block in the left lower corner, which is $k_2-k_{21}-\deg(d_{21})$. However, there is a second lower bound for the rank of $M$, namely the size of that upper triangular block in the right upper corner, in case $j_3<0$. The size of the upper triangular block is equal to $\max(-j_3,0)$. Hence, since $k_2-k_{21}-\deg(d_{21})\geq 0$, it follows that
\begin{align*}
\textup{rank}\, M
&\geq \max(k_2-k_{21}-\deg(d_{21}), \max(-j_3,0))\\
&=\max(k_2-k_{21}-\deg(d_{21}), -j_3)\\
&=\max(k_2-k_{21}-\deg(d_{21}), \deg(q_1)+k-k_1-j_1-1).
\end{align*}
Hence we obtain that
\begin{align*}
&\dim \kernel T_\Xi
 = \dim \kernel M = \deg(q_1)+k-k_1 - \textup{rank}\, M \\
&\quad\leq \deg(q_1)+k-k_1 +\\
&\qquad\qquad - \max(k_2-k_{21}-\deg(d_{21}), \deg(q_1)+k-k_1-j_1-1)\\
&\quad=\min(\deg(q_1)+k-k_1 - (k_2-k_{21}-\deg(d_{21})), j_1+1)\\
&\quad=\min(\deg(d_{21})-j_2,j_1+1)\\
&\quad=\min(\deg(q_1)+k-k_1+\deg(d_{21})+k_{21}-k_2,k_{21}-\deg(q_2)+k+1),
\end{align*}
which proves the upper bound for $\dim \kernel T_\Xi$ in case (iv).
\end{proof}

In Proposition \ref{P:2x2kernel2} there are many cases where the formula does not correspond to the result of Theorem \ref{T:Main} for the case that $\Om$ has a Wiener-Hopf type factorization that is in special form. We next show that the two results are not in conflict, by showing that whenever $\Xi$ in \eqref{2x2} is in special form, the result of Theorem \ref{T:Main} is recovered.

\begin{corollary}\label{C:2x2special}
Let $\Xi$ be given by \eqref{2x2} such that \eqref{2x2con} is satisfied  and $\Xi$ is in special form. Then $\deg(q_1)+k\leq k_1$ or $\deg(q_2)+k\geq k_2$, or if $\deg(q_1)+k> k_1$ and $\deg(q_2)+k< k_2$, then the entries of the matrix $M$ in Proposition \ref{P:2x2kernel1} (iii.b) are all zero (which can only occur in Proposition \ref{P:2x2kernel2} case (iii.b)) so that $\dim \kernel T_{\Xi}=\deg(q_1)+k-1$. In all three cases we obtain that
\[
\dim \kernel T_{\Xi}=\max(\deg(q_1)+k-k_1,0) + \max(\deg(q_2)+k-k_2).
\]
\end{corollary}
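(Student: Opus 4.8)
The plan is to handle the three regimes named in the statement separately, each time reducing to material already in hand. First I would transcribe the special form condition \eqref{SpecialFormId} into the $2\times 2$ notation of \eqref{2x2}: with $m=2$, $n=1$ and $p_{21}(z)=z^{k_{21}}d_{21}(z)$ it becomes
\[
k_1-\deg(q_1)\ \geq\ k_{21}+\deg(d_{21})-\deg(q_2),
\]
which I will call the special form inequality. If $\deg(q_1)+k\leq k_1$ or $\deg(q_2)+k\geq k_2$, there is nothing to prove beyond quoting observations (i) and (ii) following \eqref{2x2con}, where the formula $\dim\kernel T_\Xi=\max(\deg(q_1)+k-k_1,0)+\max(\deg(q_2)+k-k_2,0)$ was already established. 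So the entire argument takes place in the remaining regime $\deg(q_1)+k>k_1$ and $\deg(q_2)+k<k_2$, which is exactly the hypothesis under which Proposition \ref{P:2x2kernel1}(iii.b) and Proposition \ref{P:2x2kernel2} apply.

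In that regime I would first record the auxiliary fact that the special form inequality forces $\deg(q_2)+k>k_{21}$: if instead $\deg(q_2)+k\leq k_{21}$, the inequality would give $k_1-\deg(q_1)\geq k_{21}-\deg(q_2)+\deg(d_{21})\geq k$ (using $\deg(d_{21})\geq 0$, since $d_{21}(0)\neq0$), contradicting $\deg(q_1)+k>k_1$. Hence $\max(\deg(q_2)+k-k_{21},0)=\deg(q_2)+k-k_{21}$, so the matrix $M$ of Proposition \ref{P:2x2kernel1}(iii.b) has precisely $\deg(q_1)+k-k_1$ columns and its $(r,c)$ entry equals the Toeplitz coefficient $\de_{(\deg(q_2)+k-k_{21}+r)-c}$ of $d_{21}$. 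The smallest index occurring among these entries is attained at $r=1$, $c=\deg(q_1)+k-k_1$, and equals $k_1-\deg(q_1)-k_{21}+\deg(q_2)+1$; by the special form inequality this is at least $\deg(d_{21})+1$. Thus every entry of $M$ carries an index strictly above $\deg(d_{21})$, so $M$ is the zero matrix. For completeness I would also check, by comparing $\deg(d_{21})$ with $j_1=\deg(q_2)+k-k_{21}$ and $j_2=k_1+k_2-\deg(q_1)-k-k_{21}$, that this situation is exactly subcase (iii.b) of Proposition \ref{P:2x2kernel2}: the inequalities $\deg(d_{21})<j_1$ and $\deg(d_{21})<j_2$, both consequences of the special form inequality together with $\deg(q_2)+k<k_2$, rule out subcases (iii.a) and (iii.c).

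Finally I would conclude: since $M=0$, its kernel is all of $\BC^{\deg(q_1)+k-k_1}$, so by Proposition \ref{P:2x2kernel1}(iii.b) (equivalently, by the identity $\dim\kernel T_\Xi=\dim\kernel M$ used in the proof of Proposition \ref{P:2x2kernel2}) we obtain $\dim\kernel T_\Xi=\deg(q_1)+k-k_1$. Since in this regime $\deg(q_1)+k-k_1>0$ and $\deg(q_2)+k-k_2<0$, we have $\deg(q_1)+k-k_1=\max(\deg(q_1)+k-k_1,0)+\max(\deg(q_2)+k-k_2,0)$, so the uniform formula holds across all three regimes.

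I do not anticipate a serious obstacle: the proof is essentially a careful accounting of linear inequalities. The step needing the most care is the middle one — transcribing \eqref{SpecialFormId} correctly into the $(k_1,k_2,k_{21},d_{21},q_1,q_2)$ notation, verifying $\deg(q_2)+k>k_{21}$ so that $j_1$ is not truncated to zero, and matching the conclusion ``$M=0$'' against exactly subcase (iii.b) of Proposition \ref{P:2x2kernel2}, so that the parenthetical claim in the statement is justified.
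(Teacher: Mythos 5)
Your proposal is correct and follows essentially the same route as the paper: reduce to the regime $\deg(q_1)+k>k_1$, $\deg(q_2)+k<k_2$, use the special form inequality to show $j_1\geq\deg(d_{21})+\deg(q_1)+k-k_1$ and $j_2>\deg(d_{21})$, rule out every case of Proposition \ref{P:2x2kernel2} except (iii.b), and conclude $\dim\kernel T_\Xi=\deg(q_1)+k-k_1$ (the value $\deg(q_1)+k-1$ in the corollary's statement is a typo). Your direct entry-by-entry verification that $M=0$ is a small, harmless addition to what the paper infers from the case (iii.b) formula.
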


\begin{proof}[\bf Proof]
It is only required to prove the statement for the case where $\deg(q_1)+k> k_1$ and $\deg(q_2)+k< k_2$. So assume these two inequalities hold, and let $M$ be the matrix defined in Proposition \ref{P:2x2kernel1} (iii.b). Recall that $\Xi(z)$ is in special form when
\[
\deg(q_1)-k_1\leq \deg(q_2)-k_{21}-\deg(d_{21}).
\]
Thus
\begin{align*}
j_2 & = k_2-k_{21}+k_1-\deg(q_1) -k\\
&\geq k_2-k_{21} -(\deg(q_2)-k_{21}-\deg(d_{21}))-k\\
&= \deg(d_{21}) + k_{2} -\deg(q_2)-k.
\end{align*}
and
\begin{align*}
\deg(q_2)+k-k_{21} &\geq \deg(d_{21})+ \deg(q_1)+k-k_1.
\end{align*}
It is clear that the above inequality remains valid if the left hand-side is replaced with the maximum of $\deg(q_2)+k-k_{21}$ and zero, so that we obtain $j_1\geq \deg(d_{21})+ \deg(q_1)+k-k_1$. In particular, we have $j_2> \deg(d_{21}) \geq 0$ and $j_1> \deg(d_{21}) \geq 0$.

Now we consider the various cases of Proposition \ref{P:2x2kernel2}. The fact that $j_2 >\deg(d_{21})$ and $j_1 > \deg(d_{21})$, excludes the cases (i) ($j_2\leq 0$), (ii) ($j_2>0$, $j_1=0$), (iii.a) ($j_1\geq \deg(d_{21}) \geq j_2>0$), (iii.c) ($j_1\leq \deg(d_{21}) \leq j_2>0$) and (iv) ($j_1< \deg(d_{21}) > j_2$). Hence only case (iii.b) ($j_1\geq \deg(d_{21}) \leq j_2>0$) can occur. The special form condition tells us precisely that $\dim \kernel T_\Xi=\deg(q_1)+k-k_1$, as claimed.
\end{proof}

As can be deduced from Corollary \ref{C:2x2special}, even in the $2 \times 2$ case, there are many cases where the Wiener-Hopf type factorization is not in special form. However, the middle term $\Xi$ of Wiener-Hopf type factorization is not uniquely determined by $\Om$, and so it may well be that there is another Wiener-Hopf type factorization that is in special form. To conclude this section, we consider the case that for $\Om$ we can find a Wiener-Hopf type factorization where all the numerators are monomial (the monomial case), that is, the middle factor $\Xi$ in \eqref{2x2}--\eqref{2x2con} has $d_{21}$ constant, say $d_{21}(z)\equiv 1$. Note that in the case $\deg(d_{21})=0$, both cases in Proposition \ref{P:2x2kernel1} (iii.a) can occur, while from Proposition \ref{P:2x2kernel2} only cases (i), (ii) and (iii.b) can occur.

It turns out that in that case, if the Wiener-Hopf type factorization is in monomial form but not in special form, one can always find a Wiener-Hopf type factorization that is diagonal, and hence in special form.

\begin{lemma}\label{L:2x2mono}
Let $\Om\in\Rat^{2\times 2}$ with poles on $\BT$. Assume that $\Om$ admits a Wiener-Hopf type factorization in monomial form, i.e., with middle term $\Xi$ as in \eqref{2x2}--\eqref{2x2con} such that $\deg(d_{21})=0$. The $\Om$ admits a  Wiener-Hopf type factorization in special form.
\end{lemma}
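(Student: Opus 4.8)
The plan is to convert the given monomial middle factor $\Xi$ of \eqref{2x2}--\eqref{2x2con} into a \emph{diagonal} one by multiplying $\Xi$ on the right by an explicit polynomial matrix (a plus function) and on the left by an explicit rational matrix whose only pole is at the origin (a minus function). Since a diagonal Wiener--Hopf type factorization is automatically in special form (remark after Definition \ref{D:SpecialForm}), this proves the lemma. If the factorization is already in special form there is nothing to do, so assume it is not. Absorbing the nonzero constant $d_{21}$ we may take $d_{21}\equiv1$ (the general constant is dealt with at the end), and then the failure of \eqref{SpecialFormId} for $n=1$ reads $\deg q_1-k_1>\deg q_2-k_{21}$, i.e.
\[
k_{21}-k_1>\deg\tilde q,\qquad \tilde q:=q_2/q_1\in\cP ,
\]
where $\tilde q$ is a genuine polynomial (as $q_1\mid q_2$) with roots only on $\BT$. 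In particular $k_{21}>k_1$, and $k_2>k_{21}$ by \eqref{2x2con}.

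First I would act on the right by the column operation plus column swap
\[
B:=\pmat{-z^{k_2-k_{21}} & 1\\ 1 & 0},
\]
a polynomial matrix (using $k_2>k_{21}$), hence a plus function, with $\det B=-1$ and polynomial (hence plus) inverse $B^{-1}=\pmat{0 & 1\\ 1 & z^{k_2-k_{21}}}$. A direct computation using $q_2=q_1\tilde q$ gives the \emph{upper} triangular matrix
\[
\Xi B=z^{-k}\pmat{-z^{k_1+k_2-k_{21}}/q_1 & z^{k_1}/q_1\\ 0 & z^{k_{21}}/q_2}.
\]
Then I would act on the left by
\[
\hat A:=\pmat{-1 & \tilde q\,z^{k_1-k_{21}}\\ 0 & 1},\qquad \hat A^{-1}=\hat A,
\]
whose off-diagonal entry $\tilde q\,z^{k_1-k_{21}}=\tilde q(z)/z^{k_{21}-k_1}$ has its only pole at $0$ and --- this is where the non--special form hypothesis enters --- no pole at $\infty$, because $\deg\tilde q<k_{21}-k_1$; hence $\hat A$ and $\hat A^{-1}$ are minus functions. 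A short computation using $\tilde q/q_2=1/q_1$ then shows that $\hat A\,\Xi\,B$ is diagonal:
\[
\hat A\,\Xi\,B=z^{-k}\,\diag\!\bigl(z^{k_1+k_2-k_{21}}/q_1,\ z^{k_{21}}/q_2\bigr).
\]

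Finally, since $\Om=\Om_-\Xi\Om_+$, I would set $\Om_-':=\Om_-\hat A^{-1}$ (again a minus function with minus inverse), $\Om_+':=B^{-1}\Om_+$ (again a plus function with plus inverse), $\Om_0':=\diag(q_1^{-1},q_2^{-1})$, and $P_0':=\diag(z^{k_1'},z^{k_2'})$ with $k_1':=k_1+k_2-k_{21}\ (\ge1)$, $k_2':=k_{21}\ (\ge0)$, so that $\Om=z^{-k}\Om_-'\Om_0'P_0'\Om_+'$ is a Wiener--Hopf type factorization as in Theorem \ref{T:fact} with diagonal --- hence special form --- middle factor: $P_0'$ is lower triangular with $\det P_0'=z^{k_1+k_2}$, $q_1\mid q_2$ persists, and \eqref{XiExtra} holds trivially since $P_0'$ is diagonal. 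The routine points needing care are the sign/scalar normalisations (the $\diag(-1,1)$ built into $\hat A$, and the scalar $\diag(d_{21}^{-1},d_{21})$ one reinserts for a general constant $d_{21}$, all of which are simultaneously plus and minus functions) so that the diagonal of $P_0'$ is monic as in \eqref{Xi}. The real content, settled above, is producing a \emph{minus} left factor, and this is possible exactly because \eqref{SpecialFormId} fails.
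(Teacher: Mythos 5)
Your proposal is correct and is essentially the paper's own proof: you use the same right factor $\pmat{-z^{k_2-k_{21}}&1\\1&0}$ and the same left minus factor $\pmat{-1&(q_2/q_1)z^{k_1-k_{21}}\\0&1}$ (justified, as in the paper, by the failure of the special-form inequality making the off-diagonal entry strictly proper with its only pole at $0$), arriving at the identical diagonal middle term $z^{-k}\diag\bigl(z^{k_1+k_2-k_{21}}/q_1,\ z^{k_{21}}/q_2\bigr)$. The only differences are cosmetic (order of the two multiplications, and your explicit handling of the constant $d_{21}$ and of the sign normalisation, which the paper leaves implicit).
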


\begin{proof}[\bf Proof]
Let $\Xi$ as in \eqref{2x2}--\eqref{2x2con} be the middle factor of a monomial Wiener-Hopf type factorization of $\Om$, i.e., with $\deg(d_{21})=0$. Note that the  Wiener-Hopf type factorization is in special form when
\[
k_1-\deg(q_1) \geq k_{21} - \deg(q_2).
\]
If that is the case, then we are done, so we only have to consider the case where $k_1-\deg(q_1) < k_{21} - \deg(q_2)$. Since $q_1$ is a factor of $q_2$, we can write $q_2=q_1 r_1$ for some $r_1\in\cP$ with $\deg(p_1)=\deg(q_2)-\deg(q_1)$. Hence $\deg(p_1)<k_{21}-k_1$. Now set
\[
\Om_-(z)=\pmat{-1&p_1(z) z^{k_1-k_{21}}\\0&1}\ands \Om_+(z)=\pmat{-z^{k_2-k_{21}}&1\\1&0}.
\]
Then $\Om_+(z)$ is a plus function, with plus function inverse, because $k_{21}<k_2$. Moreover, $\Om_-(z)$ is a minus function with minus function inverse, since the non-special form assumption makes the non-constant term a strictly proper rational function with poles only at $0$. We have
\[
z^k\Om_-(z) \Xi(z)\Om_+(z)=\pmat{0& \frac{z^{k_1+k_2-k_{21}}}{q_1(z)}\\ \frac{z^{k_{21}}}{q_2(z)} & \frac{z^{k_2}}{q_2(z)}} \Om_+(z)
=\pmat{\frac{z^{k_1+k_2-k_{21}}}{q_1(z)}& 0\\ 0 & \frac{z^{k_{21}}}{q_2(z)}}.
\]
This shows that $\Om$ also has a Wiener-Hopf type factorization with middle term in diagonal form, which, in particular, is in special form.
\end{proof}

%%%%%%%%%%%%%%%%%%%%%%%%%%%%%%%%%%%%%%%%%%%%%%%%%%%%%%%%%%%%%%%%%%%%%%%%%%%%%%%%
\section{A characterization of $\kernel T_\Xi$ in the $m \times m$ case}\label{S:KernChar}

In this final section we provide a partial extension of Proposition \ref{P:2x2kernel1} to the general case of $m \times m$ matrix functions $\Om$ as well as a recursive procedure to determine the kernel of $T_\Xi$, where $\Xi$ is the middle factor of a Wiener-Hopf type factorization of $\Om$.

\begin{theorem}\label{T:KernalChar}
Let $\Om\in\Rat^{m\times m}$ with poles on $\BT$ and Wiener-Hopf type factorization \eqref{WHfact1} with middle factor $\Xi$ in \eqref{MiddleFact} of the form \eqref{Xi} with conditions \eqref{XiExtra} in place. Then for any $f=(f_1,\ldots,f_m)^T\in H^p_m$ the following are equivalent:
\begin{itemize}
\item[(i)] $f\in \kernel T_\Xi$;

\item[(ii)] $f_1,\ldots,f_m\in\cP$ such that for $j=1,\ldots,m$ we have
\begin{align}\label{mxm-kernelcond}
&\deg(p_{j1}f_1+\cdots + p_{jj}f_{j})
< \deg(q_j) +k.
\end{align}

\item[(iii)]
for $j=1,\ldots,m$, $f_j\in\cP_{l_j-1}$ and $\vec{f}:=(\vec{f}_1,\ldots,\vec{f}_m)^T\in\BC^{l_1+\cdots + l_m}$ is in $\kernel M$, where
\[
l_j=\max(\deg(q_1)-k_1-1,\dots,\deg(q_{j-1})-k_{j-1}-1,\deg(q_j)-k_j,-k)+k
\]
and
\[
M=\pmat{M_{11}&0&\cdots&0\\\vdots&\ddots&\ddots&\vdots\\\vdots&&\ddots&0\\M_{m1}&\cdots&\cdots&M_{mm}}
\]
with for $1\leq i\leq j \leq m$ the matrix $M_{ji}$ being the compression of the Toeplitz operator of $d_{ji}$ by restricting to the first $l_i$ columns and selecting rows
\[
\deg(q_j)+k +1\quad\mbox{to}\quad k_j+l_j,
\]
to be interpreted as vacuous in case $k_j+l_j\leq \deg(q_j)+k$ or $l_i=0$.
\end{itemize}
\end{theorem}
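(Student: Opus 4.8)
The plan is to prove the three-way equivalence by first establishing (i)$\iff$(ii) via a direct computation using Lemma~\ref{L:polykern} and Lemma~\ref{L:MidtermDom}, and then translating the polynomial degree conditions in (ii) into the linear-algebraic kernel condition in (iii) by expanding everything into coordinate vectors. For (i)$\Rightarrow$(ii): if $f\in\kernel T_\Xi$, then by Lemma~\ref{L:polykern} each $f_j$ is a polynomial with the degree bound $\deg(f_j)< l_j$ (this is precisely the second inequality of \eqref{degineq}, which will justify the choice of $l_j$ in (iii)). Writing $\Xi f = z^{-k}v + \rho$ as in Lemma~\ref{L:MidtermDom} with $v\in\cP_{k-1}^m$ (here we use, as in the proof of Proposition~\ref{P:kernel2}, that the $H^p_m$-part must vanish since $T_\Xi f=0$), we multiply the $j$-th row by $z^k q_j(z)$ to obtain
\[
p_{j1}(z)f_1(z)+\cdots+p_{jj}(z)f_j(z)=q_j(z)v_j(z)+z^k r_j(z),
\]
and since $\deg(v_j)<k$ and $\deg(r_j)<\deg(q_j)$ the right-hand side has degree $<\deg(q_j)+k$, giving \eqref{mxm-kernelcond}. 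Conversely, for (ii)$\Rightarrow$(i): given polynomials $f_1,\ldots,f_m$ satisfying \eqref{mxm-kernelcond}, for each $j$ the polynomial $g_j(z):=p_{j1}(z)f_1(z)+\cdots+p_{jj}(z)f_j(z)$ has degree $<\deg(q_j)+k$, so by coprimality of $q_j$ and $z^k$ we can write $g_j=q_jv_j+z^kr_j$ with $\deg(v_j)<k$, $\deg(r_j)<\deg(q_j)$; dividing by $z^kq_j$ shows the $j$-th component of $\Xi f$ lies in $z^{-k}\cP_{k-1}+\Rat_0(\BT)$, i.e. its $H^p$-part is zero, hence $T_\Xi f=0$. (One should note $f\in\Dom(T_\Xi)$ throughout, which is automatic since $\cP^m\subset\Dom(T_\Xi)$ — this follows from Lemma~\ref{L:MidtermDom} or \cite[Proposition 2.1]{GtHJR1} applied entrywise, exactly as used in Corollary~\ref{C:Kern1}.)

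\textbf{From (ii) to (iii).} The remaining work is bookkeeping: encode each constraint \eqref{mxm-kernelcond} as a system of linear equations on the coordinate vectors $\vec{f}_i$. First, (ii) with $j=1$ reads $\deg(z^{k_1}f_1)<\deg(q_1)+k$, i.e. $\deg(f_1)<\deg(q_1)+k-k_1$, which is $f_1\in\cP_{l_1-1}$ (note $l_1=\max(\deg(q_1)-k_1,-k)+k$, and if this is $\leq0$ then $f_1=0$). Proceeding by induction on $j$, once $f_1\in\cP_{l_1-1},\ldots,f_{j-1}\in\cP_{l_{j-1}-1}$ are established, the degree bound for $f_j$ extracted from \eqref{mxm-kernelcond} together with $p_{jj}=z^{k_j}$ and $\deg(p_{ji})<k_j$ gives exactly $\deg(f_j)<l_j$ — this is the content of the first inequality of \eqref{degineq} in Lemma~\ref{L:polykern}, so the degree-truncation $f_j\in\cP_{l_j-1}$ is forced and (iii)'s requirement that $\vec{f}_j\in\BC^{l_j}$ is consistent. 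Then, writing $p_{ji}(z)=z^{k_i}d_{ji}(z)$ for $i<j$ (so $d_{ji}\in\cP$ with $\deg d_{ji}<k_j-k_i$) and $p_{jj}=z^{k_j}$ (so $d_{jj}\equiv1$), the vector of coefficients of $\sum_{i\le j}p_{ji}f_i = \sum_{i\le j} z^{k_i}d_{ji}f_i$ is obtained by applying the lower-triangular Toeplitz matrix of $d_{ji}$ to $\vec{f}_i$ and then shifting up by $k_i$ rows; the requirement that this polynomial have degree $<\deg(q_j)+k$ means precisely that its coefficients in positions $\deg(q_j)+k+1$ through $k_j+l_j$ (the highest possibly-nonzero position, since $\deg(\sum z^{k_i}d_{ji}f_i)\le \max_i(k_i+\deg d_{ji}+\deg f_i)\le \max_i(k_j+l_i-1) \le k_j+l_j-1$) all vanish. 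Reading the $k_i$-shift together with the row selection, the block acting on $\vec f_i$ in row-block $j$ is exactly the compression $M_{ji}$ of the Toeplitz operator of $d_{ji}$ to the first $l_i$ columns and rows $\deg(q_j)+k+1$ to $k_j+l_j$; stacking over $j$ gives the block-lower-triangular $M$, and (ii) becomes $M\vec f=0$.

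\textbf{Main obstacle.} The conceptual content is light; the real difficulty is keeping the indexing of the Toeplitz compressions consistent — in particular correctly accounting for (a) the upward shift by $k_i$ produced by the factor $z^{k_i}$ in $p_{ji}=z^{k_i}d_{ji}$, which determines where the block $M_{ji}$ sits vertically, (b) the degenerate cases where $l_i=0$ (so $\vec f_i$ is absent, matching the "vacuous" clause) or where $k_j+l_j\le\deg(q_j)+k$ (so the $j$-th constraint is empty, again matching "vacuous"), and (c) verifying that the top row-index of $M_{ji}$ is the same $\deg(q_j)+k+1$ independently of $i$, which is what makes the matrix $M$ have the stated uniform block structure. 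I would handle this by first doing the $j$-th row in isolation with all truncations in place, writing the polynomial identity $\sum_{i\le j} z^{k_i}d_{ji}(z)f_i(z) = \sum_{\ell}c_\ell^{(j)} z^\ell$ with $c_\ell^{(j)}=0$ for $\ell\ge\deg(q_j)+k$, and then reading off that $(c_\ell^{(j)})_{\ell=\deg(q_j)+k}^{k_j+l_j-1}$ is $\big(M_{j1}\ \cdots\ M_{jj}\big)\vec f$ entrywise; the block-triangular assembly over $j$ is then immediate. The equivalence (ii)$\iff$(iii) is thus a faithful matrix reformulation, and no estimate beyond the degree bounds already proved in Lemma~\ref{L:polykern} is needed.
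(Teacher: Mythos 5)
Your proof of the equivalence (i)$\Leftrightarrow$(ii) is correct and is essentially the paper's argument: reduce to $f\in\cP^m$ via Lemma \ref{L:polykern}, multiply the $j$-th row of $\Xi f=z^{-k}h+\eta$ by $z^kq_j$, and use coprimality of $z^k$ and $q_j$ to see that the right-hand side sweeps out exactly $\cP_{\deg(q_j)+k-1}$ as $v_j$ and $r_j$ vary. The splitting into two implications rather than a single chain of equivalences is immaterial.

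The gap is in (ii)$\Leftrightarrow$(iii), at the step where you write $p_{ji}(z)=z^{k_i}d_{ji}(z)$. Nothing in \eqref{Xi}--\eqref{XiExtra} forces $z^{k_i}$ to divide $p_{ji}$ --- the only constraints are $\deg(p_{ji})<k_j$ and $q_i\mid q_{i+1}$ --- so your $d_{ji}$ need not be a polynomial, and your bound $\deg d_{ji}<k_j-k_i$ could even be negative since the $k_j$ are not ordered. (Compare the $2\times2$ case \eqref{2x2}--\eqref{2x2con}, where the monomial extracted from $p_{21}$ is $z^{k_{21}}$ with $k_{21}$ the order of vanishing of $p_{21}$ at the origin, which has nothing to do with $k_1$.) Moreover, even granting such a factorization, your block identification is off by a shift: the coefficient vector of $z^{k_i}d_{ji}f_i$ is the coefficient vector of $d_{ji}f_i$ pushed down by $k_i$ positions, so selecting the coefficients in positions $\deg(q_j)+k+1$ through $k_j+l_j$ picks out rows $\deg(q_j)+k+1-k_i$ through $k_j+l_j-k_i$ of the Toeplitz matrix of $d_{ji}$; since this shift depends on $i$, the matrix you actually construct is not the $M$ of statement (iii). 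The intended reading --- and the one the paper's proof uses --- is that in Section \ref{S:KernChar} the symbol $d_{ji}$ denotes $p_{ji}$ itself (in particular $d_{jj}=z^{k_j}$): then \eqref{mxm-kernelcond} is literally $T_{z^{-(\deg(q_j)+k)}}(T_{p_{j1}}f_1+\cdots+T_{p_{jj}}f_j)=0$, and $M_{ji}$ is the direct compression of $T_{p_{ji}}$ to the first $l_i$ columns and rows $\deg(q_j)+k+1$ to $k_j+l_j$, with no monomial extracted and no shift to track; this is also what makes $M_{jj}$ come out as $\sbm{0\\I_{l_j}}$ or $\sbm{0&I_{l_j+k_j-\deg(q_j)-k}}$ in Proposition \ref{P:mxmkernel}. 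A further minor slip: your chain $\max_i(k_i+\deg d_{ji}+\deg f_i)\le\max_i(k_j+l_i-1)\le k_j+l_j-1$ implicitly uses $l_i\le l_j$, which can fail by one (one can have $l_j=l_i-1$ when the maximum defining $l_i$ is attained at $\deg(q_i)-k_i$); the bound $k_j+l_j-1$ is still correct, but should be obtained from $\deg(p_{ji})\le k_j-1$ together with $l_i\le l_j+1$, as in the paper.
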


\begin{proof}[\bf Proof]
The structure and arguments of the proof are similar to the proof for the $2 \times 2$ case in Proposition \ref{P:2x2kernel1}. We first prove the equivalence of (i) and (ii). More precisely, we show that in $T_\Xi f$, the $j$-th entry is zero precisely when \eqref{mxm-kernelcond} holds.  From Lemma \ref{L:polykern} we know that $f\in \kernel T_\Xi$ implies that $f\in\cP^m$, hence we may restrict to the case that $f_1,\ldots,f_m$ are polynomials. Since $f\in \Dom(T_\Xi)$, by Lemma \ref{L:MidtermDom}, we know that $(\Xi f)(z)= z^{-k}h(z) + \eta(z)$ for $h=(h_1,\ldots,h_m)^T\in H^p_m$ and $\eta=(r_1/q_1,\ldots,r_m/q_m)^T\in\Rat_0^m(\BT)$, and $h$ and $\eta$ are uniquely determined by $f$ and $\Xi$. Now fix $j$ and consider the $j$-th entry of the identity $(\Xi f)(z)= z^{-k}h(z) + \eta(z)$ multiplied on both sides with $z^kq_j(z)$:
\begin{equation}\label{jentry}
p_{j1}(z)f_1(z)+\cdots + p_{jj}(z)f_{j}(z)=q_j(z) h(z)+ z^k r_j(z).
\end{equation}
Since $\BP (z^{-k}h(z))=0$ if and only if $h\in\cP_{k-1}$, it follows that the $j$-th entry of $T_\Xi f$ is zero if and only if \eqref{jentry} holds with $h\in\cP_{k-1}$. Since $z^k$ and $q_j$ are co-prime, varying $h\in\cP_{k-1}$ and $r_j\in\cP_{\deg(q_j)-1}$, in the right hand side of \eqref{jentry}, any polynomial in $\cP_{\deg(q_j)+k-1}$ can occur. Hence the $j$-th entry of $T_\Xi f$ is zero if and only if
\[
p_{j1}f_1+\cdots+p_{jj}f_{j}\in \cP_{\deg(q_j)+k-1},
\]
that is, if and only if \eqref{mxm-kernelcond} holds. Since $j$ is arbitrary, it follows that (i) and (ii) are equivalent.

Next we prove the equivalence of (ii) and (iii). The restrictions on the degrees of $f_i,\ldots,f_j$ were proved in Lemma \ref{L:polykern}. In terms of Toeplitz operators, condition \eqref{mxm-kernelcond} can be interpreted as saying that
\[
T_{z^{-(\deg(q_j)+k)}}( T_{d_{j1}}f_1+\cdots+ T_{d_{jj}}f_j)=0.
\]
Since $\deg(f_i)<l_i$, in the above identity we can restrict the Toeplitz operator $T_{d_{j1}}$ to its first $l_i$ columns (corresponding to $\cP_{l_i-1}\subset H^p$). In the resulting operator, all rows after $\deg(d_{ji})+ l_i$ are zero rows. Note further that  for all $i>1$ we have $l_i\geq l_{i-1}-1$, so that $l_j\geq l_i-1$ for $1\leq i\leq j$, while also $k_j> \deg(d_{ij})$. As a consequence, we have $\deg(d_{ji})+ l_i\leq k_j+l_j$ for $1\leq i\leq j$. Thus, in the restriction of $T_{d_{j1}}$ to its first $l_i$ columns, all rows after $k_j+l_j$ are zero, for $1\leq i\leq j$, hence we may safely remove all rows of $T_{d_{ji}}$ after the $(k_j+l_j)$-th row. Applying $T_{z^{-(\deg(q_j)+k)}}$ in the above formula corresponds to removing the first $\deg(q_j)+k$ rows, so that the resulting compression of $T_{ji}$ corresponds to $M_{ji}$, We have proved that \eqref{mxm-kernelcond} is equivalent to $\vec{f}$ being in $\kernel \pmat{M_{1j}&\cdots&M_{jj}&0&\cdots&0}$, so that the equivalence of (ii) and (iii) follows.
\end{proof}

In Theorem \ref{T:KernalChar} we did not exploit the special form of the diagonal elements of $\Xi$. Moreover, the lower triangular form of $\Xi(z)$ and resulting block lower triangular form of $M$ in Theorem \ref{T:KernalChar} suggests to determine $\kernel M$ recursively. For this purpose, decompose the matrix $M_{ji}$ for $i<j$ as
\[
M_{ji}=\pmat{N_{ji}\\L_{ji}},
\]
where $N_{ji}$ consist of the first $\max(k_j-\deg(q_j)-k,0)$ rows of $M_{ji}$ and $L_{ji}$ consists of the remaining rows. If $k_j<\deg(q_j)+k$, then $N_{ji}$ is vacuous and $L_{ji}=M_{ji}$. Now define
\[
M_j=\pmat{M_{11}&0&\cdots&0\\\ \vdots&\ddots&\ddots&\vdots\\ \vdots&&\ddots&0\\ M_{j1}&\cdots&\cdots&M_{jj}},\
R_j=\pmat{M_{11}&0&\cdots&0\\\ \vdots&\ddots&\ddots&\vdots\\ \vdots&&\ddots&0\\ M_{(j-1)1}&\cdots&\cdots&M_{(j-1)(j-1)}\\ N_{j1}&\cdots&\cdots&N_{j(j-1)}}.
\]

\begin{proposition}\label{P:mxmkernel}
Let $\Om\in\Rat^{m\times m}$ with poles on $\BT$ and Wiener-Hopf type factorization \eqref{WHfact1} with middle factor $\Xi$ in \eqref{MiddleFact} of the form \eqref{Xi} with conditions \eqref{XiExtra} in place. Then $\kernel M_{1}=\kernel M_{11}=\BC^{\max(\deg(q_1)+k-k_{1},0)}$ and recursively we determine $\kernel M_j$ as follows:\ For $f_i\in\cP_{l_j-1}$, for $i=1,\ldots,j$, we have that $\vec{f}=(\vec{f}_1,\ldots,\vec{f}_j)$ is in $\kernel M_j$ if and only if
\[
(\vec{f}_1,\ldots,\vec{f}_{j-1})\in \kernel R_j=\kernel M_{j-1}\cap \kernel \pmat{N_{j1}&\cdots&N_{j (j-1)}}
\]
and
\begin{equation}\label{fjform}
f_j= r_j+ z^{\max(\deg(q_j)+k-k_j,0)}g_j
\end{equation}
with $r_j\in\cP_{\max(\deg(q_j)+k-k_j,0)-1}$ arbitrarily and $\vec{g}_j=-L_{j1}\vec{f}_{1}-\cdots -L_{j(j-1)}\vec{f}_{j-1}$.
\end{proposition}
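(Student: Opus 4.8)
The plan is to argue by induction on $j$, using the block lower triangular structure of $M$ and the decomposition $M_{ji} = \sbm{N_{ji}\\ L_{ji}}$ in the obvious way. For the base case $j=1$, the matrix $M_1 = M_{11}$ is, by the description in Theorem \ref{T:KernalChar}(iii), the compression of the Toeplitz operator of $d_{11}$ obtained by keeping the first $l_1$ columns and rows $\deg(q_1)+k+1$ through $k_1 + l_1$. Since $p_{11}(z) = z^{k_1}$, we have $d_{11}(z) \equiv 1$, so the Toeplitz operator of $d_{11}$ is the identity; its compression to these rows and columns is therefore a (possibly vacuous) matrix that is entirely zero whenever $l_1 \le \deg(q_1)+k - k_1$, i.e.\ whenever there is no overlap between the retained columns $1,\dots,l_1$ and the retained rows $\deg(q_1)+k+1,\dots,k_1+l_1$ of the identity. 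A short bookkeeping check with $l_1 = \max(\deg(q_1)-k_1,-k)+k = \max(\deg(q_1)+k - k_1, 0)$ shows that $M_{11}$ is indeed the zero matrix of $\max(k_1 - \deg(q_1)-k,0)$ rows and $\max(\deg(q_1)+k-k_1,0)$ columns, so $\kernel M_{11} = \BC^{\max(\deg(q_1)+k-k_1,0)}$, as claimed.

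For the inductive step, write $\vec{f} = (\vec{f}_1,\dots,\vec{f}_j)$ and split the equation $M_j \vec{f} = 0$ into its first $j-1$ block rows and its last block row. The first $j-1$ block rows are precisely $M_{j-1}(\vec{f}_1,\dots,\vec{f}_{j-1}) = 0$, since the top-left $(j-1)\times(j-1)$ block of $M_j$ is exactly $M_{j-1}$ and the $j$-th block column contributes nothing there (it is zero above the diagonal). The last block row reads
\[
M_{j1}\vec{f}_1 + \cdots + M_{j(j-1)}\vec{f}_{j-1} + M_{jj}\vec{f}_j = 0.
\]
Here $M_{jj}$ is the compression of the Toeplitz operator of $d_{jj}\equiv 1$ (again because $p_{jj}(z)=z^{k_j}$) keeping columns $1,\dots,l_j$ and rows $\deg(q_j)+k+1,\dots,k_j+l_j$; one checks that this is precisely $\sbm{0 & I_{l_j - \max(\deg(q_j)+k-k_j,0)}}$ stacked under a zero block — more simply, the last block row of $M_{jj}\vec f_j$ extracts from $\vec f_j$ the coordinates indexed by $\max(\deg(q_j)+k-k_j,0)+1,\dots,l_j$, which are exactly the coefficients $\vec g_j$ in the splitting \eqref{fjform}. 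Splitting this last block row further according to $M_{ji} = \sbm{N_{ji}\\ L_{ji}}$: the top $\max(k_j - \deg(q_j)-k,0)$ rows give $N_{j1}\vec f_1 + \cdots + N_{j(j-1)}\vec f_{j-1} = 0$ (the $M_{jj}$-contribution vanishes on these rows because those rows of the identity-compression are the zero rows sitting above the coefficients of $\vec g_j$), while the remaining rows give $L_{j1}\vec f_1 + \cdots + L_{j(j-1)}\vec f_{j-1} + \vec g_j = 0$, i.e.\ $\vec g_j = -L_{j1}\vec f_1 - \cdots - L_{j(j-1)}\vec f_{j-1}$. Combining, $M_j\vec f = 0$ is equivalent to $(\vec f_1,\dots,\vec f_{j-1}) \in \kernel M_{j-1} \cap \kernel\sbm{N_{j1}&\cdots&N_{j(j-1)}} = \kernel R_j$, together with $\vec g_j$ determined as above and $r_j \in \cP_{\max(\deg(q_j)+k-k_j,0)-1}$ free; this is exactly the assertion.

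The main obstacle is bookkeeping: one must verify carefully that the row index ranges in the definition of $M_{jj}$ (from Theorem \ref{T:KernalChar}(iii)) line up so that, after removing the first $\max(k_j-\deg(q_j)-k,0)$ rows (the rows collected into the $N$-blocks), the contribution of $M_{jj}$ to the surviving rows is exactly the identity on the $\vec g_j$-coordinates and is zero on the $N$-rows — in other words that the block decomposition $M_{jj} = \sbm{0 & 0 \\ 0 & I}$ (zero on $N$-rows, identity on $L$-rows against the $\vec g_j$-columns) holds with the indexing of \eqref{fjform}. One also has to handle the vacuous cases ($k_j \le \deg(q_j)+k$, so $N_{ji}$ absent; or $l_i = 0$, so a whole block column of columns disappears) uniformly, which is routine once the index arithmetic for $l_j$ is pinned down. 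No analytic input beyond Theorem \ref{T:KernalChar} is needed; the whole argument is a linear-algebraic unwinding of the block structure.
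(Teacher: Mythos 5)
Your argument follows the same route as the paper's proof: peel off the last block row of $M_j$, split it into the $N$-rows and the $L$-rows according to $M_{ji}=\sbm{N_{ji}\\L_{ji}}$, and use the explicit form of $M_{jj}$ to solve for the tail $\vec{g}_j$ of $\vec{f}_j$ while leaving the head $r_j$ free; your treatment of the two cases ($N_{ji}$ vacuous and $M_{jj}$ surjective when $k_j\le\deg(q_j)+k$, so that $R_j=M_{j-1}$, versus $f_j$ completely determined when $k_j>\deg(q_j)+k$) matches the paper's.

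The one point that needs repair is your identification of $d_{jj}$. In Theorem \ref{T:KernalChar} the symbol $d_{ji}$ is the full numerator $p_{ji}$ (its proof rewrites \eqref{mxm-kernelcond} as $T_{z^{-(\deg(q_j)+k)}}(T_{d_{j1}}f_1+\cdots+T_{d_{jj}}f_j)=0$, which requires $T_{d_{ji}}f_i=p_{ji}f_i$), so $d_{jj}(z)=p_{jj}(z)=z^{k_j}$ and the relevant Toeplitz operator is the shift by $k_j$, not the identity. Taken literally, compressing the identity to columns $1,\dots,l_j$ and rows $\deg(q_j)+k+1,\dots,k_j+l_j$ would place the nonzero block in columns $\deg(q_j)+k+1,\dots,l_j$, i.e.\ $k_j$ columns too far to the right of where you need it. Compressing the shift by $k_j$ instead gives exactly $M_{jj}=\sbm{0\\I_{l_j}}$ if $k_j>\deg(q_j)+k$ and $M_{jj}=\sbm{0&I_{l_j+k_j-\deg(q_j)-k}}$ if $k_j\le\deg(q_j)+k$, which is the form you actually use in the remainder of the argument (note that your vacuity condition $l_1\le\deg(q_1)+k-k_1$ in the base case is also the one for the shift, not for the identity). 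With $d_{jj}=z^{k_j}$ substituted for $d_{jj}\equiv 1$, your bookkeeping is consistent, the asserted block form of $M_{jj}$ is correct, and the induction goes through exactly as in the paper.
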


\begin{proof}[\bf Proof]
It is clear that for $\vec{f}$ to be in $\kernel M_j$ we need $(\vec{f}_1,\ldots,\vec{f}_{j-1})$ to be in $\kernel M_{j-1}$. Hence we only have to determine the extra conditions imposed by $\vec{f}$ being in the kernel of the last block row of $M_j$. Decomposing $M_{jj}$ in the same way as $M_{ji}$ for $i<j$ we obtain that
\[
M_{jj}=\pmat{0\\ I_{l_j}}\quad \mbox{ if }\quad k_j>\deg(q_j)+k,
\]
or
\[
M_{jj}=\pmat{0& I_{l_j+k_j-\deg(q_j)-k}}\quad \mbox{ if }\quad k_j\leq\deg(q_j)+k.
\]

The statement for $j=1$ is now obvious. For $j>1$ we distinguish between two cases, corresponding to the two forms $M_{jj}$ can have.

First assume that $k_j\leq\deg(q_j)+k$. In that case $N_{ji}$ is vacuous and $L_{ji}=M_{ji}$ for $i<j$, and $M_{jj}$ is surjective. Hence $R_j=M_{j-1}$ so that $\kernel R_j=\kernel M_{j-1}$. Moreover, $\vec{f}$ is in $\kernel M_j$ if $(\vec{f}_1,\ldots,\vec{f}_{j-1})$ is in
$\kernel M_{j-1} =\kernel R_j$ and
\[
M_{jj}\vec{f}_j = - M_{j1}\vec{f}_1-\cdots - M_{j(j-1)}\vec{f}_{j-1}.
\]
Given the form of $M_{jj}$ in this case, the above identity corresponds to $f_j$ being as in \eqref{fjform} since $M_{ji}=L_{ji}$.

Next we consider the case where $k_j>\deg(q_j)+k$. In this case the last block row of $M_j$ has the form
\[
\pmat{N_{j1}&\cdots&N_{j (j-1)}&0\\ L_{j1}&\cdots&L_{j (j-1)}&I_{l_j}}.
\]
Hence for $\vec{f}$ to be in $\kernel M_j$ it is necessary and sufficient for $(\vec{f}_1,\ldots,\vec{f}_{j-1})$ to be in the kernel of  $\pmat{N_{j1}&\cdots&N_{j (j-1)}}$ and
\[
0=L_{j1}\vec{f}_1+\cdots+L_{j(j-1)}\vec{f}_{(j-1)}+\vec{f_j}.
\]
This corresponds to \eqref{fjform} since $\max(\deg(q_j)+k-k_j,0)=0$ and hence $f_j=g_j$, with $g_j$ as in the theorem.
\end{proof}

\begin{remark}
The dichotomy observed in Remark \ref{R:2x2dichotomy} also appears in the general case. Indeed when recursively determining the kernel of $M$ as described in Proposition \ref{P:mxmkernel}, when investigating the restriction imposed by the $j$-th row, in case $k_j\leq \deg(q_j)+k$ no further constraints are added on $f_1,\ldots,f_{j-1}$ and for $f_j$ there is a freedom added of dimension $\deg(q_j)+k-k_j$, whereas in the case where $k_j>\deg(q_j)+k$, $f_j$ is completely determined and a further reduction of the dimension is established by the requirement that $(\vec{f}_1,\ldots,\vec{f}_{j-1})^T$ is in the kernel of $\pmat{N_{j1}&\cdots&N_{j(j-1)}}$.
\end{remark}

\subsection*{Acknowledgements}

This work is based on research supported in part by the National Research Foundation of South Africa (NRF), (Grant Numbers 118513, 127364 and 145688) and the DSI-NRF Centre of Excellence in Mathematical and Statistical Sciences (CoE-MaSS). Any opinion, finding and conclusion or recommendation expressed in this material is that of the authors and the NRF and CoE-MaSS do not accept any liability in this regard.

\end{document}